\newtheorem{thm}{Theorem}[section]
\newtheorem{lemma}[thm]{Lemma}
\newtheorem{define}[thm]{Definition}
\newtheorem{prop}[thm]{Proposition}
\newtheorem*{remark}{Remark}
\title{On an inverse problem for a fractional semilinear elliptic equation involving a magnetic potential}
\author{Li Li}
\affil{Department of Mathematics, University of Washington,\\
Seattle, WA 98195, USA}
\date{}
\begin{document}

\maketitle

\noindent \textbf{ABSTRACT.}\, We study a class of fractional semilinear elliptic equations and formulate the corresponding Calder\'on problem. We determine the nonlinearity from the exterior partial measurements of the Dirichlet-to-Neumann map by using first order linearization and the Runge approximation property.

\section{Introduction}

The study of the fractional Calder\'on problem was initiated in \cite{ghosh2020calderon} where the authors considered an inverse problem for the fractional linear operator 
$$(-\Delta)^s+ q\qquad (0< s< 1).$$
See \cite{bhattacharyya2018inverse, cao2017simultaneously, cao2018determining, covi2020inverse, ghosh2017calderon, ghosh2020uniqueness, ruland2020fractional} for further studies based on \cite{ghosh2020calderon}.

Recently a fractional semilinear Calder\'on problem has been studied in 
\cite{lai2020inverse}. This inverse problem can be viewed as a nonlocal analogue of the classical semilinear Calder\'on problem studied in \cite{isakov1994global}. In \cite{lai2020inverse}, the authors considered the exterior Dirichlet problem
$$(-\Delta)^s u+ a(x, u)= 0\,\,\, \text{in}\,\,\Omega,\qquad
u|_{\Omega_e}= g$$
where $\Omega$ is a bounded domain with $C^{1,1}$ boundary and $\Omega_e:= \mathbb{R}^n\setminus\bar{\Omega}$, $n\geq 2$. Under some regularity assumptions on 
$a(\cdot, \cdot)$, the authors proved that the nonlinearity $a(\cdot, \cdot)$ can be
uniquely determined from the exterior partial measurements
of the Dirichlet-to-Neumann map 
$$\Lambda_a: g\to (-\Delta)^su_g|_{\Omega_e}.$$

In this paper, we extend the earlier result in \cite{lai2020inverse}. We study the generalized operator 
$\mathcal{R}^s_A$, which is formally defined by
\begin{equation}\label{Refml}
\mathcal{R}^s_A u(x):= 2\lim_{\epsilon\to 0^+}
\int_{\mathbb{R}^n\setminus B_\epsilon(x)}(u(x)- R_A(x, y)u(y))K(x,y)\,dy
\end{equation}
where $K(x, y)= c_{n, s}/|x-y|^{n+2s}$, 
$A$ is a fixed real vector-valued magnetic potential and
\begin{equation}\label{cos}
R_A(x, y):= \cos((x-y)\cdot A(\frac{x+y}{2})).
\end{equation}
Clearly $\mathcal{R}^s_A$ coincides with $(-\Delta)^s$ when $A= 0$
and $\mathcal{R}^s_A$ is the real part of the fractional magnetic Laplacian 
$(-\Delta)^s_A$ formally defined by 
$$(-\Delta)^s_A u(x):= 2\lim_{\epsilon\to 0^+}
\int_{\mathbb{R}^n\setminus B_\epsilon(x)}
(u(x)- e^{i(x-y)\cdot A(\frac{x+y}{2})}u(y))K(x,y)\,dy$$
(see for instance, \cite{d2018ground, squassina2016bourgain}) when $u$ is real-valued. We study the exterior Dirichlet problem
\begin{equation}\label{nonlineareDp}
\mathcal{R}^s_A u+ a(x, u)= 0\,\,\, \text{in}\,\,\Omega,\qquad
u|_{\Omega_e}= g.
\end{equation}
Our goal is to determine the nonlinearity $a(\cdot, \cdot)$ from the knowledge of the associated Dirichlet-to-Neumann map. Our inverse problem can be viewed as a semilinear analogue of the fractional
Calder\'on problem studied in \cite{li2020calderon}, which is a nonlocal analogue of the classical
Calder\'on problem for the magnetic Laplacian studied in 
\cite{ferreira2007determining, krupchyk2014uniqueness, nakamura1995global, sun1993inverse}.

Linearization is a standard technique used in solving the nonlinear Calder\'on problem. See for instance,
\cite{isakov1995global, sun2010inverse, sun1997inverse}.
In \cite{lai2020inverse}, the authors applied high order linearization
to prove their uniqueness theorem (see Section 3). See \cite{krupchyk2019partial, krupchyk2020remark, lassas2019inverse} for similar techniques used in solving the  
semilinear Calder\'on problem for local operators. 
In this paper,
we use the first order linearization in the Sobolev space $H^s(\mathbb{R}^n)$ and the 
Runge approximation property obtained in \cite{li2020calderon} instead to prove our uniqueness theorem.

To ensure that the exterior Dirichlet problem (\ref{nonlineareDp})
is well-posed for small $g$, we assume that $A$ satisfies some boundedness condition and 
the nonlinearity $a(x, z): \Omega\times \mathbb{R}\to \mathbb{R}$ satisfies\\
(i)\, $z\to a(\cdot, z)$ is analytic with values in $C^s(\Omega)$;\\
(ii)\, $a(x, 0)= 0$ and $\partial_z a(x, 0)\geq c> 0$
for some constant $c> 0$\\
so we have the Taylor's expansion
\begin{equation}\label{TaylorHolder}
a(x, z)= \sum^\infty_{k=1}a_k(x)\frac{z^k}{k!},\quad 
a_k(x)= \partial^k_z a(x, 0)\in C^s(\Omega)
\end{equation}
where the series converges in the H\"older space $C^s(\Omega)$ topology. 

Under the assumptions above, we can define the bounded solution operator
$Q_{A, a}: g\to u_g$
and the Dirichlet-to-Neumann map $\Lambda_{A, a}$ formally given by
\begin{equation}\label{DN}
\Lambda_{A, a}g:= \mathcal{R}^s_A (Q_{A, a}(g))|_{\Omega_e}
\end{equation}
to formulate corresponding Calder\'on problem. 

The following theorem is the main result in this paper.
\begin{thm}
Let $\Omega\subset \mathbb{R}^n$, $n\geq 2$ be a bounded domain with $C^{1,1}$ boundary.
Suppose $\Omega\cup \mathrm{supp}\,A\subset B_r(0)$ for some constant $r> 0$
and $||A||_{L^\infty(\mathbb{R}^n)}\leq \pi/(8\sqrt{n}r)$, $a^{(j)}$ satisfy (i) and (ii), $W_j$ are open sets s.t. $W_j\cap B_{3r}(0)= \emptyset$ ($j= 1, 2$). If 
$$\Lambda_{A, a^{(1)}}g|_{W_2}= \Lambda_{A, a^{(2)}}g|_{W_2},\qquad g\in C^\infty_c(W_1)$$
whenever $||g||_{C^2(\mathbb{R}^n)}$ is sufficiently small, then $a^{(1)}= a^{(2)}$ in $\Omega\times \mathbb{R}$.
\end{thm}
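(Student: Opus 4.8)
The plan is to recover the nonlinearity by combining a single (first order) linearization of the Dirichlet-to-Neumann map at $g=0$ with the Runge approximation property for the linear operator $\mathcal{R}^s_A$ established in \cite{li2020calderon}, and then to bootstrap from the linear coefficient to the full Taylor series of $a$ by an inductive argument on the expansion \eqref{TaylorHolder}.

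\textbf{Step 1: Well-posedness and differentiability of the solution operator.}
First I would verify that under the smallness hypothesis $\|A\|_{L^\infty}\leq \pi/(8\sqrt n r)$ and assumptions (i)--(ii), the bilinear form associated with $\mathcal{R}^s_A$ is coercive on $\widetilde H^s(\Omega)$ (the factor $R_A(x,y)$ stays close to $1$ on $B_r(0)\times B_r(0)$ because $|(x-y)\cdot A(\tfrac{x+y}2)|\leq 2r\sqrt n\,\|A\|_\infty\leq \pi/4$, so $\cos$ of it is bounded below by a positive constant there), so that together with $\partial_z a(x,0)\geq c>0$ one obtains, via the implicit function theorem in the analytic category, a well-defined analytic solution operator $Q_{A,a}\colon g\mapsto u_g$ on a small ball in $C^\infty_c(W_1)$ (or the relevant exterior Sobolev space), and hence an analytic map $g\mapsto \Lambda_{A,a}g$. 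This is essentially the construction already referenced in the paragraph preceding \eqref{DN}, so I would only sketch it.

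\textbf{Step 2: First order linearization.}
Differentiating \eqref{nonlineareDp} once in the direction $g$ at $g=0$ (using $a(x,0)=0$, so $u_0=0$), the derivative $v:=\partial_\varepsilon|_{\varepsilon=0}u_{\varepsilon g}$ solves the \emph{linear} exterior problem $\mathcal{R}^s_A v + q\,v = 0$ in $\Omega$, $v|_{\Omega_e}=g$, where $q(x):=\partial_z a(x,0)=a_1(x)$. Consequently the Fréchet derivative $D\Lambda_{A,a}|_0$ equals the Dirichlet-to-Neumann map $\Lambda_{A,q}^{\mathrm{lin}}$ of that linear magnetic Schrödinger-type problem. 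The hypothesis that the two DN maps agree for all small $g\in C^\infty_c(W_1)$ when restricted to $W_2$ forces $\Lambda_{A,a^{(1)}}^{\mathrm{lin}}g|_{W_2}=\Lambda_{A,a^{(2)}}^{\mathrm{lin}}g|_{W_2}$ for all $g\in C^\infty_c(W_1)$. I would then invoke the integral identity: for $g_1,g_2\in C^\infty_c(W_1)$ and $v_j$ the corresponding linear solutions for potential $q^{(j)}$, one has $\langle (\Lambda_{A,q^{(1)}}^{\mathrm{lin}}-\Lambda_{A,q^{(2)}}^{\mathrm{lin}})g_1,\, g_2\rangle = \int_\Omega (q^{(1)}-q^{(2)})\,v_1 v_2^{(0)}\,dx$ (with $v_2^{(0)}$ the solution for, say, a reference potential), so the vanishing of the left side gives $\int_\Omega (q^{(1)}-q^{(2)})\,v_1 v_2 = 0$ for all such solutions; by the Runge approximation property from \cite{li2020calderon}, products $v_1 v_2$ with $v_j$ solving the homogeneous equation in $\Omega$ and generated by exterior data in $W_1$ are dense enough (density of the set $\{v_1v_2\}$ in $L^1(\Omega)$, or at least enough to separate $L^\infty$ functions) to conclude $q^{(1)}=q^{(2)}$ in $\Omega$, i.e. $a_1^{(1)}=a_1^{(2)}$.

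\textbf{Step 3: Induction on the Taylor coefficients.}
Having matched $a_1$, the two linear problems coincide, so $Q_{A,a^{(1)}}$ and $Q_{A,a^{(2)}}$ have the same first order term. I would then argue by induction: suppose $a_1^{(1)}=a_1^{(2)},\dots,a_{k-1}^{(1)}=a_{k-1}^{(2)}$. Rather than differentiate $\Lambda$ $k$ times, I prefer to exploit the full expansion \eqref{TaylorHolder}: writing $u_g = \sum_{m\ge 1}\varepsilon^m u^{(m)}$ for data $\varepsilon g$ and comparing the equations for the two nonlinearities, the lowest order at which they can differ is order $k$, where the difference $w:=u^{(k),(1)}-u^{(k),(2)}$ satisfies $\mathcal{R}^s_A w + a_1 w = -\frac{1}{k!}(a_k^{(1)}-a_k^{(2)})(u^{(1)})^k$ in $\Omega$ with zero exterior data (lower order cross terms cancel by the induction hypothesis), and $\mathcal{R}^s_A w|_{W_2}=0$. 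Testing against a linear solution $v_2$ of the adjoint homogeneous problem with exterior data in $W_1$ and using the symmetry of the bilinear form yields $\int_\Omega (a_k^{(1)}-a_k^{(2)})(u^{(1)}_g)^k\, v_2\,dx = 0$; letting $g$ range over small exterior data, $u^{(1)}_g=v_{g}$ ranges over linear solutions, and by the Runge property $v_g^k v_2$ is dense enough in $L^1(\Omega)$ to force $a_k^{(1)}(x)=a_k^{(2)}(x)$ for a.e.\ (hence every, by continuity) $x\in\Omega$. By analyticity in $z$ (assumption (i)), equality of all $a_k$ gives $a^{(1)}(x,z)=a^{(2)}(x,z)$ on $\Omega\times\mathbb{R}$.

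\textbf{Main obstacle.}
The crux is Step 2 (and its reuse in Step 3): turning the equality of partial, exterior DN data into a solvable integral identity and then squeezing out the coefficient via Runge approximation. Two technical points need care. First, I must make sure the Runge approximation property from \cite{li2020calderon}, proved for the \emph{linear} operator $\mathcal{R}^s_A+q$, genuinely produces enough products $v_1v_2$ (resp.\ $v^k v_2$) to annihilate only the zero function in $L^\infty(\Omega)$ --- this uses the strong uniqueness/unique continuation from measurements in the disjoint set $W_2$, valid because $W_j\cap B_{3r}(0)=\emptyset$ keeps the data region away from $\mathrm{supp}\,A$ and $\Omega$, and it crucially relies on the self-adjointness of $\mathcal{R}^s_A$ (the \emph{real part} $R_A=\cos$, rather than $e^{i(\cdot)}$, is exactly what makes the bilinear form symmetric, which is why the problem is posed for $\mathcal{R}^s_A$ and not $(-\Delta)^s_A$). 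Second, I need the smallness of $g$ in $C^2$ to be uniform enough that the Taylor-in-$\varepsilon$ expansions converge in $H^s(\mathbb{R}^n)$ with remainder estimates good enough to isolate the order-$k$ term; this is where the analyticity hypothesis (i) and the coercivity constant (depending on $\|A\|_\infty$ and $c$) enter quantitatively.
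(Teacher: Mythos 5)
Your route is genuinely different from the paper's, and it is essentially the strategy of \cite{lai2020inverse} that the introduction explicitly says this paper is replacing: you linearize to all orders in $\varepsilon$, derive an Alessandrini-type integral identity $\int_\Omega (a_k^{(1)}-a_k^{(2)})(u^{(1)}_g)^k v_2 = 0$ at each order, and kill the coefficient by density of products of solutions. The paper instead uses only the \emph{first} order linearization (Proposition 5.1) together with the strong uniqueness statement of Proposition 2.4: since $u^{(1)}_{\epsilon,g}-u^{(2)}_{\epsilon,g}\in\tilde H^s(\Omega)$ and $\mathcal{R}^s_A$ of the difference vanishes on $W_2$, the two nonlinear solutions coincide identically in $\mathbb{R}^n$. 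Subtracting the two equations then gives the pointwise algebraic identity $\frac{1}{l!}(a^{(1)}_l-a^{(2)}_l)u_{\epsilon,g}^{\,l}=R^{(2)}_l-R^{(1)}_l$ in $\Omega$, and the coefficients are extracted by using Runge approximation to make $u_{\epsilon,g}/\epsilon$ close to $1$ in $L^2(\Omega)$ and estimating $\||a^{(1)}_l-a^{(2)}_l|^{1/l}\|_{L^2(\Omega)}$ (the $l$-th root trick), with the remainder of size $O(\epsilon^{1/l})$. What the paper's approach buys is that no integral identity, no adjoint solutions, and no differentiability of the solution map beyond first order are ever needed.

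Two places in your argument need real work before it closes. First, Step 3 rests on the convergent expansion $u_{\epsilon g}=\sum_m \epsilon^m u^{(m)}$ and on identifying the order-$k$ coefficient of $\mathcal{R}^s_A u_{\epsilon g}|_{W_2}$; the paper only proves first-order differentiability of $g\mapsto u_g$, so you would have to establish analyticity (or at least $k$-fold differentiability with remainder estimates) of the solution operator in the $C^s$/$H^s$ scale — this is a nontrivial extra layer that the paper's method avoids entirely. Second, your density claim for $\{(u^{(1)}_g)^k v_2\}$ does not follow from the stated Runge property: Proposition 3.2 gives $L^2(\Omega)$-density of solutions, and $L^2$-closeness of $u^{(1)}_g$ to $1$ does not control $\|(u^{(1)}_g)^k-1\|_{L^1}$ for $k\ge 2$ without a uniform $L^\infty$ bound on the approximating sequence, which Runge approximation does not provide. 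This is fixable (fix one solution $u^{(1)}_g$, let $v_2$ range to get the pointwise identity $(a^{(1)}_k-a^{(2)}_k)(u^{(1)}_g)^k=0$ a.e., then vary $g$ so that $u^{(1)}_g\ne 0$ outside sets of arbitrarily small measure), but as written the step is a gap; the paper's $l$-th root estimate is precisely engineered to sidestep it.
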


\begin{remark}
The nonlinear problem is reduced to the linear one when $a^{(j)}(x, z)= a^{(j)}_1(x)z$ ($j= 1, 2$). If this is the case, then the statement still holds after we replace $||A||_{L^\infty(\mathbb{R}^n)}\leq \pi/(8\sqrt{n}r)$ by the weakened assumption 
$A\in L^\infty(\mathbb{R}^n)$. See Theorem 1.1 in \cite{li2020calderon}.
\end{remark}

The rest of this paper is organized in the following way. In Section 2, we summarize the background knowledge. We prove that the nonlinear problem (\ref{nonlineareDp}) is well-posed in Section 4, based on the $L^\infty$ estimate and the H\"older regularity theorem for the corresponding linear problem proved in Section 3.
In Section 5, we prove the main theorem.~\\

\noindent \textbf{Acknowledgment.} The author is partly supported by National Science Foundation and would like to thank Professor Gunther Uhlmann for helpful discussions.

\section{Preliminaries}

Throughout this paper
\begin{itemize}
\item $n\geq 2$ denotes the space dimension and
$0< s< 1$ denotes the fractional power

\item $\Omega$ denotes a bounded domain with $C^{1,1}$ boundary and
$\Omega_e:= \mathbb{R}^n\setminus\bar{\Omega}$

\item $B_r(0)$ denotes the open ball centered at the origin with radius $r> 0$
and $\overline{B_{r}}(0)$ denotes the closure of $B_r(0)$

\item $A: \mathbb{R}^n\to \mathbb{R}^n$ denotes a real vector-valued magnetic potential

\item $c, C, C', C_1,\cdots$ denote positive constants (which may depend on some parameters but always independent of small constants $\epsilon, \rho$)

\item $\int\cdots\int= \int_{\mathbb{R}^n}\cdots\int_{\mathbb{R}^n}$

\item $X^*$ denotes the continuous dual space of $X$ and write
$\langle f, u\rangle= f(u)$ for $u\in X,\,f\in X^*$

\item $||\cdot||_{C^2(\mathbb{R}^n)}$ is defined by
$$||f||_{C^2(\mathbb{R}^n)}= 
\sum_{|\alpha|\leq 2}||\partial^\alpha f||_{L^\infty(\mathbb{R}^n)}.$$
\end{itemize}

\subsection{Function Spaces}

Throughout this paper we refer all function spaces to real-valued function spaces.

For $t\in \mathbb{R}$, we have Sobolev spaces
$$H^t(\mathbb{R}^n):= \{u\in \mathcal{S}'(\mathbb{R}^n):
\int (1+\vert \xi\vert^2)^t\vert \mathcal{F}u(\xi)\vert^2d\xi<\infty\}$$
where $\mathcal{F}$ is the Fourier transform and $\mathcal{S}'(\mathbb{R}^n)$ is the space of temperate distributions. We have the natural identification
$H^{-t}(\mathbb{R}^n)= H^t(\mathbb{R}^n)^*$.
Let $U$ be an open set and $F$ be a closed set in $\mathbb{R}^n$, 
$$H^t(U):= \{u|_U: u\in H^t(\mathbb{R}^n)\},\qquad 
H^t_F(\mathbb{R}^n):= \{u\in H^t(\mathbb{R}^n): \mathrm{supp}\,u\subset F\},$$
$$\tilde{H}^t(U):= 
\mathrm{the\,\,closure\,\,of}\,\, C^\infty_c(U)\,\,\mathrm{in}\,\, H^t(\mathbb{R}^n).$$
Since $\Omega$ is a bounded domain with $C^{1,1}$ boundary implies $\Omega$ is Lipschitz bounded, then 
$$\tilde{H}^t(\Omega)= H^t_{\bar{\Omega}}(\mathbb{R}^n).$$

For $0< s< 1$,
one of the equivalent forms of the norm $||\cdot||_{H^s(\mathbb{R}^n)}$ is
$$||u||_{H^s(\mathbb{R}^n)}:= (||u||^2_{L^2(\mathbb{R}^n)}+ \iint\frac{|u(x)-u(y)|^2}{|x-y|^{n+2s}}\,dxdy)^{1/2}.$$
We have the H\"older space $C^s(U):= C^{0, s}(U)$ equipped with the standard norm
given by
$$||f||_{C^s(U)}:= ||f||_{L^\infty(U)}+ 
\sup_{x\neq y, x,y\in U}\frac{|f(x)- f(y)|}{|x-y|^s}.$$

\subsection{The Operator $\mathcal{R}^s_A$}
In Section 1 we gave the formal pointwise definition of $\mathcal{R}^s_A$ in (\ref{Refml}). Now we do a formal computation to motivate the bilinear form definition of $\mathcal{R}^s_A$.

It is clear from (\ref{cos}) that $R_A(x,y)= R_A(y,x)$. 
Hence for real-valued $u, v$, we can formally compute that
$$2\iint_{\{|x-y|\geq \epsilon\}}(u(x)-R_A(x,y)u(y))v(x)K(x,y)\,dydx$$
$$= \iint_{\{|x-y|\geq \epsilon\}}[(u(x)-R_A(x,y)u(y))v(x)K(x,y)+
(u(y)-R_A(x,y)u(x))v(y)K(x,y)]\,dydx$$
$$= \mathrm{Re}\iint_{\{|x-y|\geq \epsilon\}}
(u(x)-e^{i(x-y)\cdot A(\frac{x+y}{2})}u(y))
(v(x)-e^{-i(x-y)\cdot A(\frac{x+y}{2})}v(y))K(x, y)\,dxdy.$$
Now let $\epsilon\to 0^+$.

\begin{define}
For real-valued $u, v$, we define $\mathcal{R}^s_A$ by the bilinear form

$$\langle \mathcal{R}^s_Au, v\rangle:= \mathrm{Re}\iint
(u(x)-e^{i(x-y)\cdot A(\frac{x+y}{2})}u(y))
(v(x)-e^{-i(x-y)\cdot A(\frac{x+y}{2})}v(y))K(x, y)\,dxdy$$
\begin{equation}\label{blrsa}
= 2\iint(u(x)-R_A(x,y)u(y))v(x)K(x,y)\,dxdy.
\end{equation}
\end{define}
It is easy to verify that
$$\langle \mathcal{R}^s_Au, v\rangle= \langle \mathcal{R}^s_Av, u\rangle.$$

\begin{define}
We define the magnetic Sobolev norm $||\cdot||_{H^s_A}$ by
$$||u||_{H^s_A}:= (||u||^2_{L^2}+ [u]^2_{H^s_A})^{1/2}$$
where $[u]_{H^s_A}:= \langle \mathcal{R}^s_Au, u\rangle^\frac{1}{2}$.
\end{define}
This norm was introduced in \cite{d2018ground, squassina2016bourgain}. As we mentioned in Section 1, $\mathcal{R}^s_A$ is the real part of the fractional magnetic Laplacian, whose properties have been studied in \cite{li2020calderon}. In fact, Lemma 3.3 and Proposition 3.4 in \cite{li2020calderon} imply the following proposition.
\begin{prop}
Suppose $0< s< 1$ and $A\in L^\infty(\mathbb{R}^n)$, then we have the norm equivalence
$||\cdot||_{H^s_A}\sim ||\cdot||_{H^s}$ and the operator
$$\mathcal{R}^s_A: H^s(\mathbb{R}^n)\to H^{-s}(\mathbb{R}^n)$$
is linear and bounded.
\end{prop}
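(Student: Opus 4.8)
The plan is to reduce the proposition to one elementary pointwise identity for the integrand of the bilinear form \eqref{blrsa}, two integrability facts for $|x-y|^{-n-2s}$, and a single application of Cauchy--Schwarz. Throughout write $\theta = \theta(x,y) := (x-y)\cdot A(\tfrac{x+y}{2})$, which is real since $A$ is, and recall $K(x,y) = K(y,x) > 0$. For real-valued $u$, $\overline{u(x) - e^{i\theta}u(y)} = u(x) - e^{-i\theta}u(y)$, so with $v = u$ the integrand of the defining bilinear form becomes the real, non-negative quantity $|u(x) - e^{i\theta}u(y)|^2 K(x,y)$; hence
$$[u]_{H^s_A}^2 = \langle \mathcal{R}^s_A u, u\rangle = \iint \bigl| u(x) - e^{i\theta}u(y)\bigr|^2 K(x,y)\,dx\,dy \ \ge\ 0 .$$
In other words $[u]_{H^s_A}$ is the norm of $(x,y)\mapsto u(x)-e^{i\theta(x,y)}u(y)$ in $L^2(\mathbb{R}^n\times\mathbb{R}^n,\,K\,dx\,dy)$, a quantity that depends $\mathbb{R}$-linearly on $u$; so it obeys the triangle inequality, and since $||\cdot||_{H^s_A}\ge||\cdot||_{L^2}$ it is a genuine norm on the set where it is finite.

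The heart of the matter is that the magnetic seminorm differs from the Gagliardo seminorm only by an $L^2$-controlled amount. Since $u$ is real,
$$\bigl| u(x) - e^{i\theta}u(y)\bigr|^2 = |u(x)-u(y)|^2 + 2\,u(x)u(y)\,(1-\cos\theta),$$
while $|1-\cos\theta| = 2\sin^2(\theta/2)\le\min\{\,2,\ \tfrac12|x-y|^2\,||A||_{L^\infty(\mathbb{R}^n)}^2\,\}$ and $2|u(x)u(y)|\le u(x)^2+u(y)^2$. Splitting the $(x,y)$-integral over $\{|x-y|<1\}$ and $\{|x-y|\ge1\}$ and using
$$\int_{|z|<1}|z|^{\,2-n-2s}\,dz<\infty\ \ (\text{using }s<1),\qquad \int_{|z|\ge1}|z|^{-n-2s}\,dz<\infty\ \ (\text{using }s>0),$$
one obtains
$$\Bigl|\ \iint\bigl| u(x) - e^{i\theta}u(y)\bigr|^2 K\,dx\,dy\ -\ \iint|u(x)-u(y)|^2 K\,dx\,dy\ \Bigr|\ \le\ C\,\bigl(1+||A||_{L^\infty(\mathbb{R}^n)}^2\bigr)\,||u||_{L^2(\mathbb{R}^n)}^2 .$$
As $\iint|u(x)-u(y)|^2 K\,dx\,dy = c_{n,s}\iint|u(x)-u(y)|^2|x-y|^{-n-2s}\,dx\,dy$, adding $||u||_{L^2}^2$ to both sides yields the norm equivalence $||\cdot||_{H^s_A}\sim||\cdot||_{H^s}$ on $H^s(\mathbb{R}^n)$, with constants depending on $n$, $s$, $||A||_{L^\infty}$; in particular $[\,\cdot\,]_{H^s_A}$ is finite on $H^s(\mathbb{R}^n)$, so the integral \eqref{blrsa} converges absolutely there.

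For the mapping property, apply Cauchy--Schwarz in $L^2(\mathbb{R}^n\times\mathbb{R}^n,\,K\,dx\,dy)$ to the bilinear form. Since $v$ is real, $|v(x)-e^{-i\theta}v(y)|=|v(x)-e^{i\theta}v(y)|$ pointwise, so the square of that factor integrates against $K$ to $[v]_{H^s_A}^2$, and therefore
$$\bigl|\langle\mathcal{R}^s_A u, v\rangle\bigr|\ \le\ [u]_{H^s_A}\,[v]_{H^s_A}\ \le\ ||u||_{H^s_A}\,||v||_{H^s_A}\ \le\ C\,||u||_{H^s(\mathbb{R}^n)}\,||v||_{H^s(\mathbb{R}^n)} .$$
Because the form is $\mathbb{R}$-linear in $u$ and $H^{-s}(\mathbb{R}^n)=H^s(\mathbb{R}^n)^*$, this says exactly that $u\mapsto\mathcal{R}^s_A u$ is a bounded linear map $H^s(\mathbb{R}^n)\to H^{-s}(\mathbb{R}^n)$.

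I do not expect a real obstacle: the only care needed is bookkeeping --- that $|z|^{\,2-n-2s}$ is integrable near the origin exactly when $s<1$, that keeping the real part straight makes the cross term $2u(x)u(y)(1-\cos\theta)$ with no sine contribution, and that absolute convergence of \eqref{blrsa} comes for free from the upper bound rather than needing a separate hypothesis. Alternatively, as the text indicates, one may quote Lemma~3.3 and Proposition~3.4 of \cite{li2020calderon} for the complex magnetic Sobolev norm and the operator $(-\Delta)^s_A$ and restrict to real-valued functions, since on such functions $||\cdot||_{H^s_A}$ is literally the magnetic norm and $\mathcal{R}^s_A$ is the associated real form.
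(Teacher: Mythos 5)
Your proof is correct. The paper itself gives no argument for this proposition: it simply declares that Lemma~3.3 and Proposition~3.4 of \cite{li2020calderon} imply it, so the only ``route'' it takes is the citation you mention in your closing alternative. Your self-contained version is the standard one and is sound at every step: the identity $|u(x)-e^{i\theta}u(y)|^2=|u(x)-u(y)|^2+2u(x)u(y)(1-\cos\theta)$ for real $u$ is exactly right, the bound $1-\cos\theta\le\min\{2,\tfrac12|x-y|^2\,\|A\|_{L^\infty}^2\}$ together with $2|u(x)u(y)|\le u(x)^2+u(y)^2$ and the two integrability facts for $|z|^{-n-2s}$ (near zero with the extra $|z|^2$, at infinity without) gives the $L^2$-controlled discrepancy between the two seminorms, and Cauchy--Schwarz in $L^2(K\,dx\,dy)$ plus the identification $H^{-s}=\left(H^s\right)^*$ yields boundedness of $\mathcal{R}^s_A$. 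What your write-up buys over the paper's is that the reader sees explicitly where each hypothesis enters ($0<s<1$ for the two integrals, $A\in L^\infty$ and $A$ real-valued for the cosine bound and for nonnegativity of the quadratic form), and that absolute convergence of the bilinear form on $H^s\times H^s$ comes for free; the cost is about a page of bookkeeping that the paper outsources.
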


From now on we always assume $A\in L^\infty(\mathbb{R}^n)$. Corollary 5.3 in 
\cite{li2020calderon} implies the following proposition, which will be used later in the proof of the main theorem.
\begin{prop}
Suppose $\Omega\cup \mathrm{supp}\,A\subset B_r(0)$ for some $r> 0$, $W$ is an open set s.t. $W\setminus \overline{B_{3r}}(0)\neq \emptyset$. If
$$u\in \tilde{H}^s(\Omega),\qquad \mathcal{R}^s_{A}u|_W= 0$$
then $u= 0$ in $\mathbb{R}^n$.
\end{prop}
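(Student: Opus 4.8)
The plan is to reduce the claim, via a support argument exploiting $\mathrm{supp}\,A\subset B_r(0)$, to the strong uniqueness property of the ordinary fractional Laplacian $(-\Delta)^s$, for which I would invoke the Caffarelli--Silvestre extension together with the Carleman estimate of Ghosh--Salo--Uhlmann. Concretely, set $W':= W\setminus\overline{B_{3r}}(0)$, which is a nonempty open set on which $|x|>3r$. Since $u\in\tilde H^s(\Omega)=H^s_{\bar\Omega}(\mathbb R^n)$ is supported in $\bar\Omega\subset\overline{B_r}(0)$ and $W'\subset\{|x|>3r\}$, we already have $u|_{W'}=0$; the whole point is to show $(-\Delta)^s u|_{W'}=0$ as well.

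First I would test the bilinear form (\ref{blrsa}) against $v\in C_c^\infty(W')$. Because $\mathrm{supp}\,u$ and $\mathrm{supp}\,v$ are disjoint, indeed separated by distance $\geq 2r$, the kernel $K(x,y)$ has no diagonal singularity on the relevant region and the term $u(x)v(x)$ vanishes, so
$$\langle \mathcal{R}^s_A u, v\rangle = -2\iint R_A(x,y)\,u(y)v(x)K(x,y)\,dx\,dy,$$
an absolutely convergent integral. The key geometric observation is that for $y\in\mathrm{supp}\,u$ and $x\in W'$ one has $|x+y|\geq |x|-|y|>3r-r=2r$, hence $\tfrac{x+y}{2}\notin\overline{B_r}(0)\supset\mathrm{supp}\,A$, so by (\ref{cos}) $A(\tfrac{x+y}{2})=0$ and $R_A(x,y)=\cos 0=1$ on all of $\mathrm{supp}\,u\times\mathrm{supp}\,v$. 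Comparing with the same computation for $(-\Delta)^s$, namely the $A=0$ case of (\ref{blrsa}), yields $\langle \mathcal{R}^s_A u,v\rangle=\langle (-\Delta)^s u,v\rangle$ for every $v\in C_c^\infty(W')$, that is, $\mathcal{R}^s_A u=(-\Delta)^s u$ in $W'$.

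Combining this identity with the hypothesis $\mathcal{R}^s_A u|_W=0$ and $W'\subset W$ gives $(-\Delta)^s u|_{W'}=0$. Together with $u|_{W'}=0$, the strong uniqueness property of $(-\Delta)^s$ on the nonempty open set $W'$ then forces $u\equiv 0$ in $\mathbb R^n$, which is the claim; this is exactly the content of Corollary 5.3 in \cite{li2020calderon}, reached here by reducing the magnetic operator to the non-magnetic one.

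The main obstacle is the unique continuation input: the implication ``$(-\Delta)^s u=0$ and $u=0$ on an open set force $u\equiv 0$'' is the deep step, resting on the degenerate-elliptic Caffarelli--Silvestre extension and a Carleman estimate rather than on any elementary argument. The remaining work is the support bookkeeping above, whose only genuine care-points are (a) verifying that $\mathrm{supp}\,u$ and $\mathrm{supp}\,v$ stay separated, so that splitting the bilinear form into convergent pieces is legitimate, and (b) tracking that the threshold $3r$ (rather than $2r$) is precisely what is needed to push $\tfrac{x+y}{2}$ out of $\mathrm{supp}\,A$.
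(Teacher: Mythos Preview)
Your argument is correct and matches the paper's approach: the paper does not prove Proposition~2.4 directly but simply invokes Corollary~5.3 of \cite{li2020calderon}, and what you have written is precisely a clean reconstruction of that corollary---reduce to $(-\Delta)^s$ on $W'=W\setminus\overline{B_{3r}}(0)$ by the midpoint support trick $A(\tfrac{x+y}{2})=0$, then apply the strong unique continuation property for the fractional Laplacian. Your bookkeeping on the $3r$ threshold and on the separation of supports is exactly right.
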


\section{The Linear Exterior Problem}
Throughout this section we assume $0< c\leq q(x)\in L^\infty(\Omega)$.

We first recall some results in \cite{li2020calderon}. Based on Proposition 4.3 and Proposition 4.5 in \cite{li2020calderon}, we have the following proposition.

\begin{prop}
The bilinear form
$$B_{A, q}(u, v):= \langle \mathcal{R}^s_Au, v\rangle+ \int_\Omega quv$$
is coercive and bounded on $\tilde{H}^s(\Omega)\times \tilde{H}^s(\Omega)$. 
The exterior problem
\begin{equation}\label{Rsalep}
\left\{
\begin{aligned}
(\mathcal{R}^s_A+ q)u&= 0\quad \text{in}\,\,\Omega\\
u&= g\quad \text{in}\,\,\Omega_e\\
\end{aligned}
\right.
\end{equation}
has a unique (weak) solution $u_g\in H^s(\mathbb{R}^n)$ for each $g\in H^s(\mathbb{R}^n)$ and the solution operator 
$$P_{A, q}: g\to u_g$$ is bounded on $H^s(\mathbb{R}^n)$.
\end{prop}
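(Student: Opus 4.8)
The plan is to derive the whole statement from the Lax--Milgram lemma applied to $B_{A,q}$ on the Hilbert space $\tilde{H}^s(\Omega)$, after absorbing the exterior data into the right-hand side. Fix $g\in H^s(\mathbb{R}^n)$. A function $u\in H^s(\mathbb{R}^n)$ is a weak solution of (\ref{Rsalep}) exactly when $u-g\in\tilde{H}^s(\Omega)$ and $B_{A,q}(u,\phi)=0$ for all $\phi\in\tilde{H}^s(\Omega)$; writing $v:=u-g$, this is equivalent to finding $v\in\tilde{H}^s(\Omega)$ with $B_{A,q}(v,\phi)=-B_{A,q}(g,\phi)$ for every $\phi\in\tilde{H}^s(\Omega)$. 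Once $B_{A,q}$ is known to be bounded, $\phi\mapsto-B_{A,q}(g,\phi)$ is a bounded linear functional on $\tilde{H}^s(\Omega)$ of norm at most $C\|g\|_{H^s(\mathbb{R}^n)}$, so the existence, the uniqueness, and the solution-operator bound all follow from coercivity plus boundedness of $B_{A,q}$ on $\tilde{H}^s(\Omega)\times\tilde{H}^s(\Omega)$.

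I would first check boundedness. By Proposition 2.3 the operator $\mathcal{R}^s_A\colon H^s(\mathbb{R}^n)\to H^{-s}(\mathbb{R}^n)$ is bounded, so $|\langle\mathcal{R}^s_Au,v\rangle|\le C\|u\|_{H^s}\|v\|_{H^s}$, and trivially $|\int_\Omega quv|\le\|q\|_{L^\infty(\Omega)}\|u\|_{L^2}\|v\|_{L^2}\le C\|u\|_{H^s}\|v\|_{H^s}$. For coercivity I would use that any $u\in\tilde{H}^s(\Omega)$ is supported in $\overline{\Omega}$, hence $\|u\|_{L^2(\Omega)}=\|u\|_{L^2(\mathbb{R}^n)}$, together with $q\ge c>0$ and Definitions 2.1--2.2:
$$B_{A,q}(u,u)=[u]^2_{H^s_A}+\int_\Omega q\,u^2\ge[u]^2_{H^s_A}+c\,\|u\|^2_{L^2(\mathbb{R}^n)}\ge\min(1,c)\,\|u\|^2_{H^s_A}.$$
The norm equivalence $\|\cdot\|_{H^s_A}\sim\|\cdot\|_{H^s}$ from Proposition 2.3 then yields $B_{A,q}(u,u)\ge c'\|u\|^2_{H^s(\mathbb{R}^n)}$ for all $u\in\tilde{H}^s(\Omega)$, which is the desired coercivity.

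Finally, noting that $B_{A,q}$ is symmetric since $\langle\mathcal{R}^s_Au,v\rangle=\langle\mathcal{R}^s_Av,u\rangle$, the Lax--Milgram lemma (equivalently, the Riesz representation theorem for the inner product $B_{A,q}$) gives, for each $g$, a unique $v\in\tilde{H}^s(\Omega)$ with $B_{A,q}(v,\phi)=-B_{A,q}(g,\phi)$ on $\tilde{H}^s(\Omega)$ and with $\|v\|_{H^s}\le C\|g\|_{H^s}$; then $u_g:=v+g$ is the unique weak solution and $\|u_g\|_{H^s}\le\|v\|_{H^s}+\|g\|_{H^s}\le C'\|g\|_{H^s}$, so $P_{A,q}$ is bounded on $H^s(\mathbb{R}^n)$. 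I do not anticipate a genuine obstacle: the one nontrivial ingredient is coercivity of the nonlocal part, which is precisely the norm equivalence in Proposition 2.3 (coming from Lemma 3.3 and Proposition 3.4 of \cite{li2020calderon}); granting that, the remainder is the standard Lax--Milgram reduction together with routine estimates.
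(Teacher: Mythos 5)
Your proof is correct. The paper does not argue this proposition in-text at all --- it imports it wholesale from Propositions 4.3 and 4.5 of \cite{li2020calderon} --- and your Lax--Milgram reduction (split $u=v+g$ with $v\in\tilde{H}^s(\Omega)$, boundedness from the continuity of $\mathcal{R}^s_A:H^s\to H^{-s}$, coercivity from $[u]^2_{H^s_A}+c\|u\|^2_{L^2}\geq \min(1,c)\|u\|^2_{H^s_A}\gtrsim\|u\|^2_{H^s}$) is precisely the standard argument that citation stands in for. The only genuinely nontrivial input is the norm equivalence $\|\cdot\|_{H^s_A}\sim\|\cdot\|_{H^s}$, which you correctly take from Proposition 2.3, the same place the paper gets it; everything else is routine.
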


Proposition 5.4 in \cite{li2020calderon} implies the Runge approximation property of $\mathcal{R}^s_A+ q$, which will be used later in the proof of the main theorem.
\begin{prop}
Suppose $\Omega\cup \mathrm{supp}\,A\subset B_r(0)$ for some $r> 0$, $W$ is an open set s.t. $W\subset \Omega_e$ and $W\setminus \overline{B_{3r}}(0)\neq \emptyset$, then
$$S:= \{P_{A, q}f|_\Omega: f\in C^\infty_c(W)\}$$
is dense in $L^2(\Omega)$.
\end{prop}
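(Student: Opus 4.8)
The plan is to establish the density of $S$ in $L^2(\Omega)$ by the classical Hahn--Banach duality argument, which reduces the statement to the strong unique continuation property already recorded in Proposition 2.4 (indeed this proposition is essentially Proposition 5.4 of \cite{li2020calderon}). Since $L^2(\Omega)$ is reflexive and equals its own dual, it suffices to show that any $v\in L^2(\Omega)$ annihilating $S$, i.e. $\int_\Omega (P_{A,q}f)\,v=0$ for every $f\in C^\infty_c(W)$, must vanish identically.

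First I would introduce the adjoint solution. Using the coercivity and boundedness of $B_{A,q}$ on $\tilde H^s(\Omega)\times\tilde H^s(\Omega)$ from Proposition 3.1 and the continuous embedding $L^2(\Omega)\hookrightarrow (\tilde H^s(\Omega))^*$ (which holds because $H^s(\mathbb{R}^n)\hookrightarrow L^2(\mathbb{R}^n)$), the Lax--Milgram theorem yields a unique $\phi\in\tilde H^s(\Omega)$ with $B_{A,q}(\phi,\psi)=\int_\Omega v\psi$ for all $\psi\in\tilde H^s(\Omega)$; informally $(\mathcal{R}^s_A+q)\phi=v$ in $\Omega$ and $\phi=0$ in $\Omega_e$. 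Fix $f\in C^\infty_c(W)$ and set $u_f:=P_{A,q}f$. Since $\mathrm{supp}\,f\subset W\subset\Omega_e$ and $\Omega$ is Lipschitz, one has $u_f-f\in H^s_{\bar\Omega}(\mathbb{R}^n)=\tilde H^s(\Omega)$ and $f\equiv 0$ on $\Omega$; testing the equation for $\phi$ against $u_f-f$, using the symmetry of $B_{A,q}$ and the weak equation $B_{A,q}(u_f,\phi)=0$, gives
$$\int_\Omega v\,u_f=B_{A,q}(\phi,u_f-f)=-B_{A,q}(f,\phi)=-\langle\mathcal{R}^s_A f,\phi\rangle=-\langle\mathcal{R}^s_A \phi,f\rangle,$$
where the last two equalities use $f\equiv 0$ on $\Omega$ (so $\int_\Omega qf\phi=0$) and the symmetry $\langle\mathcal{R}^s_A u,v\rangle=\langle\mathcal{R}^s_A v,u\rangle$. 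Because $\mathrm{supp}\,f\subset W$, the right-hand side equals $-\langle\mathcal{R}^s_A\phi|_W,f\rangle$, so the annihilation hypothesis says $\langle\mathcal{R}^s_A\phi|_W,f\rangle=0$ for every $f\in C^\infty_c(W)$, that is, $\mathcal{R}^s_A\phi|_W=0$.

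Now I would invoke Proposition 2.4: since $\phi\in\tilde H^s(\Omega)$, $\mathcal{R}^s_A\phi|_W=0$, and $W\setminus\overline{B_{3r}}(0)\neq\emptyset$, it follows that $\phi\equiv 0$ in $\mathbb{R}^n$. Substituting $\phi=0$ into $B_{A,q}(\phi,\psi)=\int_\Omega v\psi$ gives $\int_\Omega v\psi=0$ for all $\psi\in\tilde H^s(\Omega)$, and since $C^\infty_c(\Omega)\subset\tilde H^s(\Omega)$ is dense in $L^2(\Omega)$ we conclude $v=0$, proving that $S$ is dense in $L^2(\Omega)$. The only genuinely hard ingredient — passing from $\mathcal{R}^s_A\phi|_W=0$ to $\phi\equiv 0$ — is precisely the unique continuation content of Proposition 2.4, so the remaining effort is routine bookkeeping: the Lax--Milgram solvability of the adjoint problem, the symmetry of the bilinear form, and the identification $\tilde H^s(\Omega)=H^s_{\bar\Omega}(\mathbb{R}^n)$. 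The one subtlety worth care is verifying that $u_f-f$ really lies in the test space $\tilde H^s(\Omega)$, which is where the Lipschitz (here $C^{1,1}$) regularity of $\partial\Omega$ is used.
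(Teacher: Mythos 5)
Your argument is correct and complete: the Hahn--Banach/orthogonality reduction, the Lax--Milgram construction of the adjoint state $\phi$, the integration-by-parts identity $\int_\Omega v\,u_f=-\langle\mathcal{R}^s_A\phi,f\rangle$ using symmetry of $B_{A,q}$ and $f|_\Omega=0$, and the appeal to the unique continuation statement of Proposition 2.4 are exactly the standard route. The paper itself offers no proof here --- it simply cites Proposition 5.4 of \cite{li2020calderon} --- and your write-up is precisely the duality argument that result rests on, so there is nothing to fix.
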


Next we prove an $L^\infty$ estimate and a H\"older regularity theorem, which will be useful in later sections when we deal with the nonlinear problem.

\subsection{$L^\infty$ Estimate}
\begin{lemma}
If $g\in C^\infty_c(\mathbb{R}^n)$, then $(-\Delta)^s g\in L^\infty(\mathbb{R}^n)$ and
$$||(-\Delta)^s g||_{L^\infty(\mathbb{R}^n)}\leq C||g||_{C^2(\mathbb{R}^n)}$$
\end{lemma}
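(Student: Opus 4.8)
The plan is to work directly with the standard singular-integral representation of the fractional Laplacian. Since $g\in C^\infty_c(\mathbb{R}^n)\subset\mathcal{S}(\mathbb{R}^n)$, we have the pointwise formula
$$(-\Delta)^s g(x)= \frac{c_{n,s}}{2}\int_{\mathbb{R}^n}\frac{2g(x)-g(x+z)-g(x-z)}{|z|^{n+2s}}\,dz,$$
where the symmetrized numerator is used precisely so that the integrand is absolutely integrable near $z=0$. I would then split the integral into the region $|z|\leq 1$ and the region $|z|>1$ and estimate each piece by a constant multiple of $\|g\|_{C^2(\mathbb{R}^n)}$, uniformly in $x$.

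For the inner region $|z|\leq 1$, I would apply the second-order Taylor expansion of $g$ around $x$: the first-order terms cancel in $2g(x)-g(x+z)-g(x-z)$, leaving
$$|2g(x)-g(x+z)-g(x-z)|\leq \Big(\sum_{|\alpha|=2}\|\partial^\alpha g\|_{L^\infty(\mathbb{R}^n)}\Big)|z|^2\leq \|g\|_{C^2(\mathbb{R}^n)}|z|^2.$$
Hence the inner contribution is bounded by $C\|g\|_{C^2(\mathbb{R}^n)}\int_{|z|\leq 1}|z|^{2-n-2s}\,dz$, and this integral converges because $2-2s>0$ (in polar coordinates the radial integrand is $r^{1-2s}$, integrable on $(0,1]$). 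This yields a bound $C\|g\|_{C^2(\mathbb{R}^n)}$ for the inner piece, with $C$ depending only on $n$ and $s$.

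For the outer region $|z|>1$, no cancellation is needed: I would simply bound $|2g(x)-g(x+z)-g(x-z)|\leq 4\|g\|_{L^\infty(\mathbb{R}^n)}\leq 4\|g\|_{C^2(\mathbb{R}^n)}$ and use $\int_{|z|>1}|z|^{-n-2s}\,dz<\infty$ (radial integrand $r^{-1-2s}$, integrable on $[1,\infty)$). Adding the two contributions gives the claimed estimate with a constant $C=C(n,s)$, and in particular shows $(-\Delta)^s g$ is finite for every $x$, hence in $L^\infty(\mathbb{R}^n)$. There is essentially no serious obstacle here; the only point requiring a little care is justifying the pointwise integral representation and the absolute convergence near $z=0$, both of which are handled by the symmetrization and the $C^2$ bound as above.
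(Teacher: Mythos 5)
Your proposal is correct and follows the same route as the paper: the symmetrized second-difference representation of $(-\Delta)^s g$, a split into $|z|\leq 1$ and $|z|>1$, a second-order Taylor bound on the inner piece and the trivial $L^\infty$ bound on the outer piece. You simply spell out the details that the paper leaves implicit.
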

\begin{proof}
For $g\in C^\infty_c(\mathbb{R}^n)$, we have
$$(-\Delta)^s g(x)= c_{n, s}\int\frac{2g(x)- g(x+y)- g(x-y)}{|y|^{n+2s}}dy$$
(see for instance, Lemma 3.2 in \cite{di2011hitchhiker}) 
so by using Taylor's expansion, we have
$$|(-\Delta)^s g(x)|\leq c_{n, s}(\int_{|y|\leq 1}+ \int_{|y|> 1})
\frac{|2g(x)- g(x+y)- g(x-y)|}{|y|^{n+2s}} dy\leq C||g||_{C^2(\mathbb{R}^n)}.$$
\end{proof}

\begin{lemma}
If $A, g\in L^\infty(\mathbb{R}^n)$, then $((-\Delta)^s- \mathcal{R}^s_A) g\in L^\infty(\mathbb{R}^n)$ and
$$||((-\Delta)^s- \mathcal{R}^s_A) g||_{L^\infty(\mathbb{R}^n)}\leq C||g||_{L^\infty(\mathbb{R}^n)}.$$
\end{lemma}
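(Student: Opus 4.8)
The plan is to recast the difference operator as an honest (non\nobreakdash-singular) integral operator and then estimate it by hand. Comparing the pointwise definition (\ref{Refml}) of $\mathcal{R}^s_A$ with the corresponding formula for $(-\Delta)^s$, which is the case $A= 0$ with $R_A\equiv 1$, the singular term $g(x)- g(y)$ cancels in the difference and one is left with
$$((-\Delta)^s- \mathcal{R}^s_A)g(x)= 2\int_{\mathbb{R}^n}(R_A(x,y)- 1)\,g(y)\,K(x,y)\,dy,$$
an ordinary Lebesgue integral provided we verify absolute convergence. So the first task is to check that the new kernel $(R_A(x,y)-1)K(x,y)$ is integrable in $y$, the only issue being near the diagonal.

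That is controlled by the elementary inequality $|\cos t- 1|\leq \min(2,\ t^2/2)$. Taking $t= (x-y)\cdot A(\frac{x+y}{2})$ and $|t|\leq c_n\,||A||_{L^\infty(\mathbb{R}^n)}\,|x-y|$ with $c_n$ a dimensional constant, this gives
$$|R_A(x,y)- 1|\leq C(n)\,\min\bigl(1,\ ||A||^2_{L^\infty(\mathbb{R}^n)}\,|x-y|^2\bigr).$$
Inserting this and $|K(x,y)|= c_{n,s}|x-y|^{-n-2s}$ and splitting the $y$-integral at $|x-y|= 1$: on $\{|x-y|\leq 1\}$ the quadratic bound makes the integrand at most $C\,||A||^2_{L^\infty(\mathbb{R}^n)}\,|g(y)|\,|x-y|^{2-n-2s}$, which is integrable because $2- 2s> 0$; on $\{|x-y|> 1\}$ the bound $|R_A-1|\leq 2$ makes the integrand at most $2c_{n,s}\,|g(y)|\,|x-y|^{-n-2s}$, which is integrable because $2s> 0$. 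The two resulting radial integrals are elementary and independent of $x$, so pulling out $||g||_{L^\infty(\mathbb{R}^n)}$ yields
$$||((-\Delta)^s- \mathcal{R}^s_A)g||_{L^\infty(\mathbb{R}^n)}\leq C(n,s)\bigl(1+ ||A||^2_{L^\infty(\mathbb{R}^n)}\bigr)||g||_{L^\infty(\mathbb{R}^n)},$$
which is the asserted estimate, $A$ being fixed. Note that both halves of the split use the full range $0< s< 1$: one needs $s< 1$ near the diagonal and $s> 0$ at infinity.

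The one point that deserves genuine care is the justification of the first display, i.e.\ that the difference of the two operators, in the sense in which they are used elsewhere (for instance through the bilinear form (\ref{blrsa})), really equals that Lebesgue integral for merely bounded $g$. I would prove this first for $g\in C^\infty_c(\mathbb{R}^n)$, where both $(-\Delta)^s g$ and $\mathcal{R}^s_A g$ exist pointwise: for $(-\Delta)^s g$ by the preceding lemma, and for $\mathcal{R}^s_A g$ by writing $g(x)- R_A(x,y)g(y)= (g(x)- g(y))+ (1- R_A(x,y))g(y)$, which reduces the principal value to the one for $(-\Delta)^s$ plus an absolutely convergent remainder; one then tests against $v\in C^\infty_c(\mathbb{R}^n)$ and applies Fubini, legitimate once the kernel bound above is available. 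The extension to arbitrary $g\in L^\infty(\mathbb{R}^n)$ is then a routine density/duality step, using that $((-\Delta)^s- \mathcal{R}^s_A)\phi$ is bounded and compactly supported for $\phi\in C^\infty_c(\mathbb{R}^n)$ because $\mathrm{supp}\,A$ is bounded. Everything past that is the two-region computation already sketched, which presents no difficulty.
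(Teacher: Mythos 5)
Your proof is correct and follows essentially the same route as the paper: both exploit the cancellation of the singular part to write the difference as an absolutely convergent integral with kernel $(1- R_A(x,y))K(x,y)$, bound $0\leq 1- R_A(x,y)\leq C_A\min\{1, |x-y|^2\}$ (the paper via $1-\cos\theta= 2\sin^2(\theta/2)$, you via $|\cos t- 1|\leq \min(2, t^2/2)$, which is the same estimate), and split the integral at $|x-y|= 1$ using $2- 2s> 0$ near the diagonal and $2s> 0$ at infinity. Your additional care in justifying the pointwise identity for merely bounded $g$ is a refinement the paper leaves implicit, not a change of method.
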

\begin{proof}
Note that 
$$0\leq 1- R_A(x,y)= 2\sin^2(\frac{1}{2}(x-y)\cdot A(\frac{x+y}{2}))\leq 
C_A\min\{1, |x-y|^2\}$$
so we have 
$$|((-\Delta)^s- \mathcal{R}^s_A) g(x)|\leq
\int (1- R_A(x, y))K(x, y)|g(y)|dy$$
$$= (\int_{|y-x|\leq 1}+ \int_{|y-x|> 1})(1- R_A(x, y))K(x, y)|g(y)|dy
\leq C||g||_{L^\infty(\mathbb{R}^n)}.$$
\end{proof}

From now on we always assume 
$\Omega\cup \mathrm{supp}\,A\subset B_r(0)$ for some constant $r> 0$
and $||A||_{L^\infty(\mathbb{R}^n)}\leq \pi/(8\sqrt{n}r)$. This coincides with the assumption on $A$ in the statement of Theorem 1.1.

Note that under this assumption, we have
$$0\leq R_A(x, y)\leq 1,\qquad 
(x, y)\in (\mathbb{R}^n\times \mathbb{R}^n)\setminus (\Omega_e\times \Omega_e).$$
In fact, if $(x,y)\in \Omega\times \mathbb{R}^n$, then\\
(i)\, $y\in B_{3r}(0)$, which implies $|x-y|\leq 4r$, 
$|(x-y)\cdot A(\frac{x+ y}{2})|\leq \frac{\pi}{2}$;\\
(ii)\, $y\notin B_{3r}(0)$, which implies $|\frac{x+y}{2}|\geq r$, $R_A= 1$.\\
By symmetry of $R_A$, we know the claim also holds for $(x,y)\in 
\mathbb{R}^n\times \Omega$.

The following two propositions generalize Proposition 3.1 and Proposition 3.3 in
\cite{lai2019global}.

\begin{prop}
Suppose $0< c\leq q(x)\in L^\infty(\Omega)$. If $u\in H^s(\mathbb{R}^n)$ solves the exterior problem 
\begin{equation}
\left\{
\begin{aligned}
(\mathcal{R}^s_A+ q)u&= f\quad \text{in}\,\,\Omega\\
u&= g\quad \text{in}\,\,\Omega_e\\
\end{aligned}
\right.
\end{equation}
for $0\leq f\in H^{-s}(\Omega)$ and $0\leq g|_{\Omega_e}\in L^\infty(\Omega_e)$, then $u\geq 0$.
\end{prop}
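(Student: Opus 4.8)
The plan is the standard maximum‑principle (Stampacchia truncation) argument, adapted to the magnetic nonlocal operator: test the weak formulation against the negative part $u_-:=\max(-u,0)\ge 0$ and check that each resulting term has the right sign.

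First I would verify that $u_-$ is an admissible test function. Since $0<s<1$ and $|u_-(x)-u_-(y)|\le|u(x)-u(y)|$ pointwise, the Gagliardo seminorm of $u_-$ is dominated by that of $u$, so $u_-\in H^s(\mathbb{R}^n)$; and because $u=g\ge 0$ on $\Omega_e$ we have $u_-=0$ on $\Omega_e$, hence $u_-\in H^s_{\bar\Omega}(\mathbb{R}^n)=\tilde H^s(\Omega)$. The solution $u$ satisfies, in the weak sense, $\langle\mathcal{R}^s_Au,v\rangle+\int_\Omega quv=\langle f,v\rangle$ for every $v\in\tilde H^s(\Omega)$ (the pairing $\langle\mathcal{R}^s_Au,v\rangle$ makes sense by Proposition 2.3). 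Taking $v=u_-$ gives
$$\langle\mathcal{R}^s_Au,u_-\rangle+\int_\Omega q\,u\,u_-=\langle f,u_-\rangle .$$

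Next I would estimate the three terms. The right side is $\ge 0$ because $f\ge 0$ and $u_-\ge 0$. Pointwise $u\,u_-=-u_-^2$, so $\int_\Omega q\,u\,u_-=-\int_\Omega q\,u_-^2\le 0$. The crux is the nonlocal term. Using the symmetric bilinear form (\ref{blrsa}) together with $R_A(x,y)=R_A(y,x)$ and $K(x,y)=K(y,x)$, I would symmetrize it as
$$\langle\mathcal{R}^s_Au,u_-\rangle=\iint\big[(u(x)-R_A(x,y)u(y))u_-(x)+(u(y)-R_A(x,y)u(x))u_-(y)\big]K(x,y)\,dxdy .$$
The integrand vanishes unless $x\in\bar\Omega$ or $y\in\bar\Omega$, i.e. unless $(x,y)\notin\Omega_e\times\Omega_e$, and there the running assumption $\|A\|_{L^\infty}\le\pi/(8\sqrt n r)$ guarantees $0\le R_A(x,y)\le 1$. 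Writing $u=u_+-u_-$ and setting $a=u_+(x),\ b=u_-(x),\ c=u_+(y),\ d=u_-(y)\ge 0$ with $ab=cd=0$, a direct expansion shows the bracketed integrand equals
$$-(b-d)^2-2(1-R_A(x,y))\,bd-R_A(x,y)(cb+ad)\le 0 ,$$
and since $K>0$ this yields $\langle\mathcal{R}^s_Au,u_-\rangle\le 0$.

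Finally, combining the three estimates, the left side of the tested identity is $\le 0$ while the right side is $\ge 0$; hence both are $0$, and in particular $\int_\Omega q\,u_-^2=0$. Since $q\ge c>0$ on $\Omega$, this forces $u_-=0$ a.e.\ in $\Omega$, and with $u_-=0$ on $\Omega_e$ we conclude $u_-\equiv 0$, i.e.\ $u\ge 0$. The only nontrivial point is the pointwise sign of the symmetrized nonlocal integrand, and it is exactly here that the smallness of $\|A\|_{L^\infty}$ enters, through the bound $R_A\in[0,1]$ off $\Omega_e\times\Omega_e$; the remaining steps are routine truncation estimates.
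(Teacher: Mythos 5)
Your proof is correct, and although it rests on the same basic device as the paper's --- testing the weak formulation against the negative part $u_-\in\tilde H^s(\Omega)$ and checking the sign of each term, with the smallness of $\|A\|_{L^\infty}$ entering only through the bound $0\le R_A\le 1$ off $\Omega_e\times\Omega_e$ --- the way you close the argument is genuinely different and somewhat cleaner. The paper splits $\langle\mathcal R^s_Au,u^-\rangle$ into an $\Omega\times\Omega$ piece and an $\Omega\times\Omega_e$ piece, bounds the former by $-\int_\Omega\int_\Omega|u^-(x)-u^-(y)|^2K\,dxdy$, deduces only that $u^-$ is a nonnegative constant $c_0$ on $\Omega$, and then needs a separate contradiction argument (evaluating $\mathcal R^s_Au$ pointwise when $u\equiv-c_0$ in $\Omega$ and using $qu<0$) to rule out $c_0>0$. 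You instead symmetrize the bilinear form over all of $\mathbb R^n\times\mathbb R^n$ and verify a pointwise sign for the integrand; I checked the algebra, it is right, and the integrand indeed vanishes on $\Omega_e\times\Omega_e$, so you only need $0\le R_A\le 1$ exactly where the paper establishes it. You then extract the conclusion from the zero-order term: since all terms on the left are $\le0$ and the right side is $\ge0$, each vanishes, in particular $\int_\Omega q\,u_-^2=0$, and $q\ge c>0$ kills $u_-$ outright. This exploits the strict positivity of $q$ (a standing hypothesis in this section anyway, and one the paper's own final contradiction also relies on) to bypass the constant-solution case entirely; what the paper's longer route buys is essentially nothing here, so your version is a legitimate simplification.
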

\begin{proof}
Write $u= u^+- u^-$ where $u^+= \max\{u, 0\}$ and $u^-= \max\{-u, 0\}$. Note that
$$|u^+(x)- u^+(y)| +|u^-(x)- u^-(y)|= |u(x)- u(y)|$$
so $u^+, u^-\in H^s(\mathbb{R}^n)$ and $g|_{\Omega_e}\geq 0$ implies $u^-\in \tilde{H}^s(\Omega)$, so we have
$$\langle\mathcal{R}^s_A u, u^-\rangle+
\int_\Omega q u u^-= f(u^-).$$
Now write
$$\langle\mathcal{R}^s_A u, u^-\rangle 
= 2\iint(u(x)-R_A(x,y)u(y))u^-(x)K(x,y)\,dxdy$$
$$= 2(\int_\Omega\int_\Omega+ \int_{\Omega_e}\int_\Omega) (u(x)-R_A(x,y)u(y))u^-(x)K(x,y)\,dxdy =: I_1+ I_2.$$

Since $u^-u\leq 0$, then we have
$$I_2= 2\int_{\Omega_e}\int_\Omega(u(x)-R_A(x,y)g(y))u^-(x)K(x,y)\,dxdy\leq 0,\quad
\int_\Omega q u u^-\leq 0.$$

Note that
$$I_1= 2\int_\Omega\int_\Omega(u(x)-R_A(x,y)u(y))u^-(x)K(x,y)\,dxdy$$
$$= 2\int_\Omega\int_\Omega[(u^+(x)-R_A(x,y)u^+(y))u^-(x)
-(u^-(x)-R_A(x,y)u^-(y))u^-(x)]K(x,y)\,dxdy$$
$$= -2\int_\Omega\int_\Omega [R_A(x,y)u^+(y)u^-(x)+(u^-(x)-R_A(x,y)u^-(y))u^-(x)]K(x,y)\,dxdy$$
$$\leq -2\int_\Omega\int_\Omega(u^-(x)-R_A(x,y)u^-(y))u^-(x)K(x,y)\,dxdy$$
$$= -\int_\Omega\int_\Omega[(u^-(x)-R_A(x,y)u^-(y))u^-(x)
+(u^-(y)-R_A(x,y)u^-(x))u^-(y)]K(x,y)\,dxdy$$
$$= -\int_\Omega\int_\Omega(|u^-(x)|^2+ |u^-(y)|^2- 2R_A(x,y)u^-(x)u^-(y))K(x,y)\,dxdy$$
$$\leq -\int_\Omega\int_\Omega|u^-(x)-u^-(y)|^2K(x,y)\,dxdy.$$
Since $f(u^-)\geq 0$, then the only possibility is
$$\int_\Omega\int_\Omega|u^-(x)-u^-(y)|^2K(x,y)\,dxdy= 0,$$
which implies $u^-$ is a non-negative constant $c_0$ in $\Omega$. Now we show $c_0$ has to be $0$. 

Otherwise $u= -c_0< 0$ in $\Omega$. In this case, for $x\in \Omega$, by pointwise definition we have
$$\mathcal{R}^s_A u(x)= 2\lim_{\epsilon\to 0^+}
(\int_{\Omega\setminus B_\epsilon(x)}+ \int_{\Omega_e})
(u(x)- R_A(x, y)u(y))K(x,y)\,dy$$
$$= 2\int_\Omega (1- R_A(x,y))(-c_0)K(x,y)dy+ 
2\int_{\Omega_e}(-c_0- R_A(x,y)g(y))K(x,y)dy\leq 0.$$
Both integrals converge since $g|_{\Omega_e}\in L^\infty(\Omega_e)$ and 
$0\leq 1- R_A(x,y)\leq C_A|x-y|^2$.
Now we have got the contradiction $$f= \mathcal{R}^s_A u+ qu< 0\quad\mathrm{in}\,\,\Omega.$$
\end{proof}

\begin{prop}
Suppose $0< c\leq q(x)\in L^\infty(\Omega)$.
If $u\in H^s(\mathbb{R}^n)$ solves the exterior problem 
\begin{equation}
\left\{
\begin{aligned}
(\mathcal{R}^s_A+ q)u&= f\quad \text{in}\,\,\Omega\\
u&= g\quad \text{in}\,\,\Omega_e\\
\end{aligned}
\right.
\end{equation}
for $f\in L^\infty(\Omega)$ and $g\in C^\infty_c(\Omega_e)$, then 
$$||u||_{L^\infty}\leq C||f||_{L^\infty(\Omega)}+ ||g||_{L^\infty(\Omega_e)}.$$
\end{prop}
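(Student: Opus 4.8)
The plan is to combine the positivity principle of the previous proposition with a comparison argument against an explicit barrier. First I would reduce to the case $f \geq 0$ and $g \geq 0$: writing $u = u_1 - u_2$ where $u_1$ solves the problem with data $(f^+, g^+)$ and $u_2$ with data $(f^-, g^-)$, it suffices to bound each of $u_1, u_2$ from above by $C\|f\|_{L^\infty(\Omega)} + \|g\|_{L^\infty(\Omega_e)}$, since the previous proposition already gives $u_1, u_2 \geq 0$. So assume from now on $f \geq 0$, $g \geq 0$.

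Next I would construct a supersolution. Let $M := \|f\|_{L^\infty(\Omega)}$ and $N := \|g\|_{L^\infty(\Omega_e)}$, and set $w := u - N$, which solves $(\mathcal{R}^s_A + q) w = f - q N + (\mathcal{R}^s_A N)$ in $\Omega$ with $w|_{\Omega_e} = g - N \leq 0$. Here one must be careful: $\mathcal{R}^s_A$ does not annihilate constants (unlike $(-\Delta)^s$), but by Lemma 3.3 (applied to the constant function $N$, which is in $L^\infty$) we have $\mathcal{R}^s_A N = ((-\Delta)^s - \mathcal{R}^s_A)(-N) $ ... more precisely $(-\Delta)^s N = 0$ so $\mathcal{R}^s_A N = \big((-\Delta)^s - \mathcal{R}^s_A\big)(N)$ is bounded with $\|\mathcal{R}^s_A N\|_{L^\infty} \leq C N$. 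Thus the right-hand side of the equation for $w$ is bounded by $M + CN$ in absolute value. I would then look for a fixed positive function $\varphi$, supported near $\bar\Omega$, with $(\mathcal{R}^s_A + q)\varphi \geq 1$ pointwise in $\Omega$ and $\varphi \geq 0$ everywhere; for instance take $\varphi = Q_{A,q}$ applied to a suitable smooth compactly supported nonnegative bump, or more concretely the solution of $(\mathcal{R}^s_A + q)\varphi = 1$ in $\Omega$, $\varphi = 0$ in $\Omega_e$, which is $\geq 0$ by the previous proposition and lies in $L^\infty$ (one can bound $\|\varphi\|_{L^\infty}$ by a Green's function / semigroup comparison with the pure fractional Laplacian, using Lemma 3.3 to absorb the magnetic perturbation). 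Then $v := w - (M + CN)\varphi$ satisfies $(\mathcal{R}^s_A + q) v \leq 0$ in $\Omega$ and $v \leq 0$ in $\Omega_e$; applying the previous proposition (in the form: a subsolution with nonpositive exterior data is nonpositive — equivalently apply it to $-v$) yields $v \leq 0$, hence $u \leq N + (M + CN)\|\varphi\|_{L^\infty} \leq C(M + N)$. Combining with $u \geq 0$ from the reduction gives the claim.

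The main obstacle I expect is the construction and $L^\infty$ control of the barrier $\varphi$, precisely because $\mathcal{R}^s_A$ lacks the clean scaling and the constant-annihilation of $(-\Delta)^s$. One cannot simply quote the classical fractional torsion-function estimate; instead I would pass to $(-\Delta)^s$ as the leading term, write $(\mathcal{R}^s_A + q)\varphi = (-\Delta)^s \varphi + q\varphi - \big((-\Delta)^s - \mathcal{R}^s_A\big)\varphi$, and treat $\big((-\Delta)^s - \mathcal{R}^s_A\big)\varphi$ together with the already-bounded perturbation via Lemma 3.3. An alternative that sidesteps an explicit $\varphi$ is a direct Stampacchia-type truncation argument on the bilinear form $B_{A,q}$: test the equation for $(u - N)^+$ against itself, use coercivity of $B_{A,q}$ (Proposition 3.1) together with the sign information on $R_A$ on $(\mathbb{R}^n\times\mathbb{R}^n)\setminus(\Omega_e\times\Omega_e)$ established just above, and bound the $L^\infty$ norm by iterating over level sets. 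Either route works; I would present whichever makes the dependence $C\|f\|_{L^\infty(\Omega)} + \|g\|_{L^\infty(\Omega_e)}$ (with the unit coefficient on the $g$-term, reflecting that $u = g$ on $\Omega_e$) most transparent, and I expect the bookkeeping of the non-vanishing of $\mathcal{R}^s_A$ on constants to be the only genuinely delicate point.
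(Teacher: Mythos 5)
Your overall architecture --- reduce to signed data, then dominate $u$ by a barrier via the maximum principle of the preceding proposition --- matches the paper's, but the step you yourself flag as ``the main obstacle'' is exactly where the proposal has a genuine gap, and the paper's resolution of it is much simpler than either route you sketch. You propose to take $\varphi$ to be the solution of $(\mathcal{R}^s_A+q)\varphi=1$ in $\Omega$, $\varphi=0$ in $\Omega_e$, and then to bound $\|\varphi\|_{L^\infty}$; but that bound is precisely an instance (with $f\equiv 1$, $g\equiv 0$) of the proposition you are trying to prove, so as stated the argument is circular. The Green's-function/semigroup comparison and the Stampacchia iteration you offer as escape hatches are only named, not carried out, and neither is available off the shelf for $\mathcal{R}^s_A$ within this paper.

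The paper's fix is to use an explicit cutoff rather than a solved-for torsion function: take $\phi\in C^\infty_c(\mathbb{R}^n)$ with $0\leq\phi\leq 1$ and $\phi\equiv 1$ on $\bar\Omega\cup\mathrm{supp}\,g$. Under the standing smallness assumption $\|A\|_{L^\infty}\leq \pi/(8\sqrt{n}\,r)$ one has $0\leq R_A(x,y)\leq 1$ for $x\in\Omega$, $y\in\mathbb{R}^n$, hence for $x\in\Omega$ the integrand in the pointwise definition satisfies $\phi(x)-R_A(x,y)\phi(y)=1-R_A(x,y)\phi(y)\geq 0$, so $\mathcal{R}^s_A\phi\geq 0$ and $(\mathcal{R}^s_A+q)\phi\geq c$ in $\Omega$ --- no PDE needs to be solved and no a priori bound is required. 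Setting $\tilde\phi:=\bigl(\tfrac{1}{c}\|f\|_{L^\infty(\Omega)}+\|g\|_{L^\infty(\Omega_e)}\bigr)\phi$, one checks $\tilde\phi\pm u\geq 0$ in $\Omega_e$ and $(\mathcal{R}^s_A+q)(\tilde\phi\pm u)\geq 0$ in $\Omega$, and Proposition 3.5 gives $|u|\leq\tilde\phi$ directly. This also makes your preliminary decomposition into data $(f^{\pm},g^{\pm})$ and the bookkeeping for $\mathcal{R}^s_A$ applied to the constant $N$ unnecessary: the single cutoff barrier handles both data terms and both signs at once. (A minor slip: the bound $\|((-\Delta)^s-\mathcal{R}^s_A)N\|_{L^\infty}\leq CN$ you invoke is the paper's Lemma 3.4, not Lemma 3.3.)
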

\begin{proof}
Fix a function $\phi\in C^\infty_c(\mathbb{R}^n)$ s.t. $0\leq \phi\leq 1$ and
$\phi= 1$ on $\bar{\Omega}\cup \mathrm{supp}\,g$.

It is clear from the pointwise definition of $\mathcal{R}^s_A$ that
$\mathcal{R}^s_A \phi\geq 0$ in $\Omega$ so
$$(\mathcal{R}^s_A+ q)\phi\geq c\quad\mathrm{in}\,\,\Omega.$$
Now let 
$\tilde{\phi}:= (\frac{1}{c}||f||_{L^\infty(\Omega)}+ ||g||_{L^\infty(\Omega_e)})\phi$, then
$\tilde{\phi}\pm u\geq 0$ in $\Omega_e$ and 
$$(\mathcal{R}^s_A+ q)(\tilde{\phi}\pm u)\geq 0\quad\mathrm{in}\,\,\Omega$$
so $|u|\leq \tilde{\phi}$ by the previous proposition.
\end{proof}

\subsection{H\"older Regularity}
\begin{prop}
(Proposition 1.1 in \cite{ros2014dirichlet}) If $u\in \tilde{H}^s(\Omega)$ and $(-\Delta)^su= f\in L^\infty(\Omega)$, then 
$u\in C^s(\mathbb{R}^n)$ and 
$$||u||_{C^s(\mathbb{R}^n)}\leq C||f||_{L^\infty(\Omega)}.$$
\end{prop}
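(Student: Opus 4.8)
This statement is precisely Proposition~1.1 of \cite{ros2014dirichlet}, so in the paper it can simply be invoked; for completeness I sketch how I would prove it, namely the standard barrier-plus-rescaling scheme for boundary regularity of the fractional Laplacian. Writing $\delta(x):=\mathrm{dist}(x,\partial\Omega)$, the plan has four steps: (1) a global $L^\infty$ bound; (2) the sharp boundary decay $|u(x)|\le C||f||_{L^\infty(\Omega)}\,\delta(x)^s$; (3) uniform interior $C^s$ estimates obtained by rescaling the equation; (4) a patching argument merging (2) and (3) into the global $C^s$ estimate.

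For step (1) I would compare $u$ with $||f||_{L^\infty(\Omega)}$ times the fractional torsion function of a ball $B_R\supset\Omega$, i.e.\ the explicit solution $w$ of $(-\Delta)^s w=1$ in $B_R$, $w=0$ in $\mathbb{R}^n\setminus B_R$ (which has $||w||_{L^\infty}\le CR^{2s}$), using the maximum principle for $(-\Delta)^s$ on $\tilde H^s(\Omega)$ — the $q=0$, trivial-exterior-data analogue of Propositions~3.5--3.6 — to get $||u||_{L^\infty(\mathbb{R}^n)}\le C||f||_{L^\infty(\Omega)}$. For step (2) I would use the interior ball condition coming from the $C^{1,1}$ hypothesis and, at each boundary point, fit from inside a barrier built from $(r^2-|x-x_0|^2)_+^s$ (whose fractional Laplacian is a positive constant on $B_r(x_0)$, and which vanishes outside); comparing $\pm u$ with the appropriate multiple, using step (1) to control the exterior contribution, yields $|u(x)|\le C||f||_{L^\infty(\Omega)}\,\delta(x)^s$.

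For step (3), fix $x_0\in\Omega$, set $2\rho:=\delta(x_0)$ and $v(x):=\rho^{-s}u(x_0+\rho x)$. Then $(-\Delta)^s v(x)=\rho^{s}f(x_0+\rho x)$, so $||(-\Delta)^s v||_{L^\infty(B_1)}\le\rho^s||f||_{L^\infty(\Omega)}$, while step (2) gives $|v(x)|\le C||f||_{L^\infty(\Omega)}(2+|x|)^s$, whence the weighted tail $\int_{\mathbb{R}^n}|v(y)|(1+|y|)^{-n-2s}\,dy\le C||f||_{L^\infty(\Omega)}$. The classical interior estimate for $(-\Delta)^s$ (also available in \cite{ros2014dirichlet}) then yields $[v]_{C^s(B_{1/2})}\le C||f||_{L^\infty(\Omega)}$, i.e.\ $[u]_{C^s(B_{\rho/2}(x_0))}\le C||f||_{L^\infty(\Omega)}$ uniformly in $x_0$. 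Step (4) is then routine: for $x,y\in\mathbb{R}^n$, if $|x-y|\ge\tfrac14\max\{\delta(x),\delta(y)\}$ I bound $|u(x)-u(y)|$ by step (2) term by term, and if $|x-y|<\tfrac14\max\{\delta(x),\delta(y)\}$ then $x$ and $y$ lie in a common ball of the type in step (3) and the interior estimate applies; combining the two cases with step (1) gives $||u||_{C^s(\mathbb{R}^n)}\le C||f||_{L^\infty(\Omega)}$.

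I expect the hard part to be steps (2)--(3): obtaining the \emph{sharp} exponent $s$ (rather than some small $\alpha$) at the boundary forces both the $C^{1,1}$ assumption — so that an explicit $s$-power barrier can be fitted from inside at every boundary point — and a careful accounting of the dilation, since under $x\mapsto x_0+\rho x$ the operator $(-\Delta)^s$ scales by $\rho^{2s}$ while the normalization $\rho^{-s}$ is chosen to keep $v$ of unit size near the boundary, leaving exactly $\rho^s$ on the right-hand side; it is this balance that makes the family of rescaled problems uniformly well-behaved and pins the global regularity at $C^s$.
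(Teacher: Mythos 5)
The paper offers no proof of this proposition: it is quoted verbatim from Proposition~1.1 of \cite{ros2014dirichlet}, exactly as you note, so simply invoking the reference is the intended argument. Your four-step sketch faithfully reproduces the strategy of that reference (global $L^\infty$ bound via a torsion-function comparison, $\delta^s$ boundary decay via an $s$-power barrier, interior $C^s$ estimates by rescaling, and a patching argument), the only cosmetic difference being that Ros-Oton and Serra run the boundary-decay step with a supersolution adapted to the \emph{exterior} ball condition rather than fitting a barrier from inside an interior ball --- both devices are available under the $C^{1,1}$ hypothesis, and your version works provided you carry out the Poisson-kernel estimate for the ``exterior contribution'' that you allude to.
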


Based on the proposition above, we now prove the H\"older regularity theorem
for the linear exterior problem (\ref{Rsalep}).

\begin{prop}
Suppose $\Omega\cup \mathrm{supp}\,A\subset B_r(0)$ for some $r> 0$
and $||A||_{L^\infty(\mathbb{R}^n)}\leq \pi/(8\sqrt{n}r)$, $0< c\leq q(x)\in L^\infty(\Omega)$ for some $c> 0$, $W\cap B_{3r}(0)= \emptyset$. If $g\in C^\infty_c(W)$, then 
$u= P_{A,q}g\in C^s(\mathbb{R}^n)$ where $P_{A,q}$ is solution operator associated with  (\ref{Rsalep}) and 
$$||u||_{C^s(\mathbb{R}^n)}\leq C||g||_{C^2(\mathbb{R}^n)}.$$
\end{prop}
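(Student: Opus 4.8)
The plan is to reduce the regularity statement for the operator $\mathcal{R}^s_A + q$ to the known regularity statement for $(-\Delta)^s$ (Proposition 3.5, i.e. Proposition 1.1 in \cite{ros2014dirichlet}) by treating the difference $(-\Delta)^s - \mathcal{R}^s_A$ together with the zeroth-order term $qu$ as a bounded right-hand side. Concretely: let $u = P_{A,q}g$, so $u \in H^s(\mathbb{R}^n)$ solves $(\mathcal{R}^s_A + q)u = 0$ in $\Omega$ with $u|_{\Omega_e} = g$. Since $g \in C^\infty_c(W)$ and $W \cap B_{3r}(0) = \emptyset$, while $\Omega \subset B_r(0)$, the exterior data $g$ vanishes near $\bar\Omega$. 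I would like to subtract off the exterior data cleanly; because $\mathrm{supp}\, g$ is disjoint from $\bar\Omega$, the function $v := u - g$ lies in $\tilde H^s(\Omega)$ (it is supported in $\bar\Omega$ after the subtraction, using $\tilde H^s(\Omega) = H^s_{\bar\Omega}(\mathbb{R}^n)$ from Section 2.1). Then $v$ solves $(-\Delta)^s v = F$ in $\Omega$ where $F := -qu - ((-\Delta)^s - \mathcal{R}^s_A)u - (-\Delta)^s g + \mathcal{R}^s_A g$ restricted to $\Omega$; I need to check $F \in L^\infty(\Omega)$.

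The four contributions to $F$ are handled as follows. First, $qu \in L^\infty(\Omega)$: by the $L^\infty$ estimate (Proposition 3.4) applied with right-hand side $0$ and exterior data $g \in C^\infty_c(\Omega_e)$, we have $\|u\|_{L^\infty} \leq \|g\|_{L^\infty(\Omega_e)} \leq C\|g\|_{C^2(\mathbb{R}^n)}$, and $q \in L^\infty(\Omega)$. Second, $(-\Delta)^s g \in L^\infty(\mathbb{R}^n)$ with $\|(-\Delta)^s g\|_{L^\infty} \leq C\|g\|_{C^2(\mathbb{R}^n)}$ by Lemma 3.1. Third, $((-\Delta)^s - \mathcal{R}^s_A)g \in L^\infty(\mathbb{R}^n)$ with norm $\leq C\|g\|_{L^\infty(\mathbb{R}^n)} \leq C\|g\|_{C^2(\mathbb{R}^n)}$ by Lemma 3.2. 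Fourth, and this is the step I expect to be the main obstacle, I need $((-\Delta)^s - \mathcal{R}^s_A)u \in L^\infty(\Omega)$ with a bound by $\|g\|_{C^2(\mathbb{R}^n)}$: Lemma 3.2 as stated requires $u \in L^\infty(\mathbb{R}^n)$, but $u$ is only known a priori to be in $H^s(\mathbb{R}^n)$. This is where Proposition 3.4 is essential again — it gives $u \in L^\infty(\mathbb{R}^n)$ globally (since $u = g$ on $\Omega_e$ and $g \in L^\infty$), so Lemma 3.2 applies to $u$ and yields $\|((-\Delta)^s - \mathcal{R}^s_A)u\|_{L^\infty(\mathbb{R}^n)} \leq C\|u\|_{L^\infty(\mathbb{R}^n)} \leq C\|g\|_{C^2(\mathbb{R}^n)}$.

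Combining, $F \in L^\infty(\Omega)$ with $\|F\|_{L^\infty(\Omega)} \leq C\|g\|_{C^2(\mathbb{R}^n)}$. Since $v \in \tilde H^s(\Omega)$ and $(-\Delta)^s v = F \in L^\infty(\Omega)$, Proposition 3.5 gives $v \in C^s(\mathbb{R}^n)$ with $\|v\|_{C^s(\mathbb{R}^n)} \leq C\|F\|_{L^\infty(\Omega)} \leq C\|g\|_{C^2(\mathbb{R}^n)}$. Finally $u = v + g$, and $g \in C^\infty_c(\mathbb{R}^n)$ obviously satisfies $\|g\|_{C^s(\mathbb{R}^n)} \leq C\|g\|_{C^2(\mathbb{R}^n)}$, so $u \in C^s(\mathbb{R}^n)$ with $\|u\|_{C^s(\mathbb{R}^n)} \leq C\|g\|_{C^2(\mathbb{R}^n)}$, as claimed. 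The one point deserving care in writing this up is the verification that $(-\Delta)^s v$ equals the asserted $F$ in the distributional/weak sense on $\Omega$ — i.e. that rearranging $(\mathcal{R}^s_A + q)u = 0$ into $(-\Delta)^s v = F$ is legitimate given that each of $(-\Delta)^s u$, $\mathcal{R}^s_A u$, $(-\Delta)^s g$, $\mathcal{R}^s_A g$ is individually a well-defined element of $H^{-s}(\Omega)$ (by Proposition 2.3 and boundedness of $(-\Delta)^s$) and that the pointwise identities from Lemmas 3.1--3.2 are consistent with the bilinear-form definitions; this is routine but should be stated.
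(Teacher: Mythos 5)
Your proposal follows essentially the same route as the paper's proof: set $v=u-g\in\tilde H^s(\Omega)$, rewrite the equation as $(-\Delta)^s v=F$ with $F$ controlled in $L^\infty(\Omega)$ via the $L^\infty$ estimate for $u$ and the two lemmas bounding $(-\Delta)^s g$ and $((-\Delta)^s-\mathcal{R}^s_A)$ applied to bounded functions, then invoke the quoted $C^s$ regularity result for the fractional Laplacian; the only cosmetic difference is that the paper applies the difference-operator lemma to $v$ and uses $R_A(x,y)=1$ for $x\in\Omega$, $y\in W$ to identify $\mathcal{R}^s_A g$ with $(-\Delta)^s g$ in $\Omega$, whereas you treat $u$ and $g$ separately. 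The one flaw is a sign/bookkeeping slip in your displayed $F$ (the correct identity in $\Omega$ is $(-\Delta)^s v=((-\Delta)^s-\mathcal{R}^s_A)u-qu-(-\Delta)^s g$), which is harmless because every term you estimate is bounded by $C\|g\|_{C^2(\mathbb{R}^n)}$ in any case.
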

\begin{proof}
Note that $v:= u- g\in \tilde{H}^s(\Omega)$ and
$$(-\Delta)^sv= ((-\Delta)^s- \mathcal{R}^s_A)v- \mathcal{R}^s_Ag- qu\quad\mathrm{in}\,\,\Omega.$$
By Proposition 3.6, $||v||_{L^\infty}\leq C_1||g||_{L^\infty}$. By Lemma 3.4,
$$||((-\Delta)^s- \mathcal{R}^s_A) v||_{L^\infty(\mathbb{R}^n)}\leq C_2||v||_{L^\infty}\leq C_3||g||_{L^\infty}.$$
Since $W\cap B_{3r}(0)= \emptyset$, then $|\frac{x+y}{2}|\geq r$, $R_A(x, y)= 1$
for $x\in \Omega, y\in W$ so
$$\mathcal{R}^s_Ag= (-\Delta)^sg\quad\mathrm{in}\,\,\Omega.$$
By Lemma 3.3, $||\mathcal{R}^s_A g||_{L^\infty(\Omega)}\leq C'||g||_{C^2(\mathbb{R}^n)}$.
Hence
$$||((-\Delta)^s- \mathcal{R}^s_A)v- \mathcal{R}^s_Ag- qu||_{L^\infty(\Omega)}
\leq C''||g||_{C^2(\mathbb{R}^n)}$$
Now by the proposition above, we have 
$||v||_{C^s(\mathbb{R}^n)}\leq C||g||_{C^2(\mathbb{R}^n)}$.
\end{proof}

\section{The Nonlinear Exterior Problem}
Now we consider the nonlinear exterior problem
\begin{equation}\label{Rsanonlinear}
\left\{
\begin{aligned}
\mathcal{R}^s_A u+ a(x, u)&= 0\quad \text{in}\,\,\Omega\\
u&= g\quad \text{in}\,\,\Omega_e.\\
\end{aligned}
\right.
\end{equation}

Recall that we assume the nonlinearity $a(\cdot, \cdot)$ has the Taylor's expansion
(\ref{TaylorHolder}) with coefficients $a_k(x)\in C^s(\Omega)$ and we also assume $a_1(x)\geq c> 0$. We write
$$R_m(x, z):= \sum^\infty_{k= m+1}\frac{a_k(x)}{k!}z^k.$$

We note that $C^s(\Omega)$ is an algebra since for $u_1, u_2\in C^s(\Omega)$, we have
$$||u_1u_2||_{C^s(\Omega)}\leq C_0
(||u_1||_{C^s(\Omega)}||u_2||_{L^\infty(\Omega)}+
||u_1||_{L^\infty(\Omega)}||u_2||_{C^s(\Omega)})$$
(see Theorem A.7 in \cite{hormander1976boundary}) so 
$$||u_1u_2||_{C^s(\Omega)}\leq 2C_0||u_1||_{C^s(\Omega)}||u_2||_{C^s(\Omega)}.$$

Also note that by Cauchy's estimate, we have
$$||a_k||_{C^s(\Omega)}\leq \frac{k!}{R^k}
\sup_{z\in\mathbb{C}, |z|=R}||a(\cdot, z)||_{C^s(\Omega)},\quad R> 0.$$

Based on the estimates above, we have the following estimates when we choose
$R= \max\{4C_0, 1\}$.
\begin{prop}
If $||u||_{C^s(\Omega)}\leq 1$, then 
$$\sum^\infty_{k= m+1}||\frac{a_k(x)}{k!}u^k||_{C^s(\Omega)}
\leq (\sum^\infty_{k= m+1}\frac{1}{2^k})\sup_{z\in\mathbb{C}, |z|=R}||a(\cdot, z)||_{C^s(\Omega)}||u||^{m+1}_{C^s(\Omega)},$$
$$\sum^\infty_{k= m+1}||\frac{a_k(x)}{(k-1)!}u^{k-1}||_{C^s(\Omega)}
\leq (\sum^\infty_{k= m+1}\frac{k}{2^{k-1}})\sup_{z\in\mathbb{C}, |z|=R}||a(\cdot, z)||_{C^s(\Omega)}||u||^m_{C^s(\Omega)}.$$
\end{prop}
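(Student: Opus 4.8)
The plan is to reduce both estimates to one elementary bound on powers: for every $u\in C^s(\Omega)$ and every integer $k\geq 1$,
$$||u^k||_{C^s(\Omega)}\leq (2C_0)^{k-1}||u||^k_{C^s(\Omega)}.$$
This follows by induction on $k$: the case $k=1$ is trivial, and the inductive step is a single application of the algebra inequality $||u_1u_2||_{C^s(\Omega)}\leq 2C_0||u_1||_{C^s(\Omega)}||u_2||_{C^s(\Omega)}$ recalled above, with $u_1=u$ and $u_2=u^{k-1}$. One further application of the same inequality gives, for every $k\geq 1$,
$$||a_ku^k||_{C^s(\Omega)}\leq (2C_0)^k||a_k||_{C^s(\Omega)}||u||^k_{C^s(\Omega)},\qquad ||a_ku^{k-1}||_{C^s(\Omega)}\leq (2C_0)^{k-1}||a_k||_{C^s(\Omega)}||u||^{k-1}_{C^s(\Omega)}.$$

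Next I would insert Cauchy's estimate $||a_k||_{C^s(\Omega)}\leq \tfrac{k!}{R^k}M$, where $M:=\sup_{z\in\mathbb{C},\,|z|=R}||a(\cdot,z)||_{C^s(\Omega)}$ is finite by assumption (i) together with continuity on the compact circle $\{|z|=R\}$ (this is exactly what legitimizes Cauchy's estimate and should be recorded at the outset). For the first sum this yields $||\tfrac{a_k}{k!}u^k||_{C^s(\Omega)}\leq (\tfrac{2C_0}{R})^k M||u||^k_{C^s(\Omega)}$, and for the second, using $k!/(k-1)!=k$,
$$||\tfrac{a_k}{(k-1)!}u^{k-1}||_{C^s(\Omega)}\leq \frac{k\,(2C_0)^{k-1}}{R^k}M||u||^{k-1}_{C^s(\Omega)}=\frac{k}{R}\Big(\frac{2C_0}{R}\Big)^{k-1}M||u||^{k-1}_{C^s(\Omega)}.$$
The choice $R=\max\{4C_0,1\}$ is then precisely what is needed: $R\geq 4C_0$ forces $2C_0/R\leq 1/2$, hence $(2C_0/R)^k\leq 2^{-k}$ and $(2C_0/R)^{k-1}\leq 2^{-(k-1)}$, while $R\geq 1$ gives $1/R\leq 1$. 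So the $k$-th summands are dominated by $2^{-k}M||u||^k_{C^s(\Omega)}$ and by $\tfrac{k}{2^{k-1}}M||u||^{k-1}_{C^s(\Omega)}$ respectively. Finally, since $||u||_{C^s(\Omega)}\leq 1$ we have $||u||^k_{C^s(\Omega)}\leq ||u||^{m+1}_{C^s(\Omega)}$ and $||u||^{k-1}_{C^s(\Omega)}\leq ||u||^m_{C^s(\Omega)}$ for all $k\geq m+1$; pulling these factors out of the sums over $k\geq m+1$ gives the two claimed inequalities, the numerical series $\sum_{k\geq m+1}2^{-k}$ and $\sum_{k\geq m+1}k\,2^{-(k-1)}$ being convergent.

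There is no genuine obstacle here; the argument is pure bookkeeping. The only points requiring care are keeping the exponent of $2C_0$ straight (it is $k$ when $a_k$ multiplies $u^k$ but $k-1$ when it multiplies $u^{k-1}$, since each extra factor costs one power of $2C_0$), the cancellation of the factorial $k!$ coming from Cauchy's estimate against the $1/k!$ or $1/(k-1)!$ in the Taylor coefficient, and noting that the hypothesis $||u||_{C^s(\Omega)}\leq 1$ is what lets the power $||u||^{m+1}_{C^s(\Omega)}$ (resp. $||u||^m_{C^s(\Omega)}$) be extracted from the tail.
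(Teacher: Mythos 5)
Your argument is correct and is exactly the one the paper intends: the proposition is stated there without proof, immediately after recording the algebra inequality with constant $2C_0$ and the Cauchy estimate $||a_k||_{C^s(\Omega)}\leq \frac{k!}{R^k}\sup_{|z|=R}||a(\cdot,z)||_{C^s(\Omega)}$, and your induction on powers plus the observation that $R=\max\{4C_0,1\}$ forces $2C_0/R\leq 1/2$ and $1/R\leq 1$ is precisely the bookkeeping being left to the reader. The exponents, the factorial cancellations, and the use of $||u||_{C^s(\Omega)}\leq 1$ to extract $||u||^{m+1}_{C^s(\Omega)}$ (resp.\ $||u||^{m}_{C^s(\Omega)}$) from the tail are all handled correctly.
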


The following proposition is an analogue of Theorem 2.1 in \cite{lai2020inverse}.
\begin{prop}
Suppose $\Omega\cup \mathrm{supp}\,A\subset B_r(0)$ for some $r> 0$
and $||A||_{L^\infty(\mathbb{R}^n)}\leq \pi/(8\sqrt{n}r)$, $W\cap B_{3r}(0)= \emptyset$ and $g\in C^\infty_c(W)$. There exists a small
constant $\rho >0$ s.t. if $||g||_{C^2(\mathbb{R}^n)}\leq \rho$, then the nonlinear 
exterior problem
(\ref{Rsanonlinear})
has a unique solution $u\in H^s(\mathbb{R}^n)\cap C^s(\mathbb{R}^n)$ satisfying
$$(u- P_{A, a_1}g)\in
M:=\{v\in C^s(\mathbb{R}^n): v|_{\Omega_e}= 0, ||v||_{C^s(\mathbb{R}^n)}\leq \rho\}.$$
Moreover, we have
$$||u||_{C^s(\mathbb{R}^n)}\leq C||g||_{C^2(\mathbb{R}^n)}.$$
\end{prop}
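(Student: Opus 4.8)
The plan is to set up a fixed point argument on the set $M$, following the usual linearization-around-$a_1$ strategy. Write the unknown solution as $u = P_{A,a_1}g + v$ with $v \in M$ to be found; here $P_{A,a_1}$ is the solution operator for the linear exterior problem $\mathcal{R}^s_A + a_1$, which is well-posed by Proposition 3.1 and which, by Proposition 3.8, maps $C^\infty_c(W)$ into $C^s(\mathbb{R}^n)$ with $\|P_{A,a_1}g\|_{C^s(\mathbb{R}^n)} \leq C\|g\|_{C^2(\mathbb{R}^n)}$. Substituting $u = P_{A,a_1}g + v$ into $\mathcal{R}^s_A u + a(x,u) = 0$ and using $a(x,z) = a_1(x)z + R_1(x,z)$, one finds that $v$ must satisfy $(\mathcal{R}^s_A + a_1)v = -R_1(x, P_{A,a_1}g + v)$ in $\Omega$ with $v|_{\Omega_e} = 0$. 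This defines the map $\Phi(v) := -P_{A,a_1}\big(R_1(\cdot, P_{A,a_1}g + v)\big)$, whose fixed points in $M$ are exactly the solutions of the desired form.

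The key steps are then: (1) show $\Phi$ maps $M$ into $M$ for $\rho$ small; (2) show $\Phi$ is a contraction on $M$; (3) conclude via the Banach fixed point theorem and record the norm bound. For step (1), I would use Proposition 3.8 to bound $\|\Phi(v)\|_{C^s(\mathbb{R}^n)} \leq C\|R_1(\cdot, P_{A,a_1}g + v)\|_{L^\infty(\Omega)}$; actually one needs $R_1(\cdot, w)$ in $C^2$ or at least $L^\infty$ as a function on $\Omega_e$ as well to apply Proposition 3.8 — but since $R_1(\cdot, w)$ is only defined on $\Omega$, I would instead apply Proposition 3.8 with an appropriate extension, or more cleanly observe that $R_1(\cdot, P_{A,a_1}g + v)$ as a forcing term lives in $C^s(\Omega) \subset L^\infty(\Omega)$ and re-run the regularity estimate of Proposition 3.8 with a general $L^\infty(\Omega)$ (indeed $C^s(\Omega)$) right-hand side and zero exterior data. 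Writing $w := P_{A,a_1}g + v$, we have $\|w\|_{C^s(\mathbb{R}^n)} \leq C\|g\|_{C^2} + \rho \leq 1$ once $\rho$ and $\|g\|_{C^2}$ are small enough; then Proposition 4.1 (with $m=1$) gives $\|R_1(\cdot,w)\|_{C^s(\Omega)} \leq (\sum_{k\geq 2} 2^{-k})\,(\sup_{|z|=R}\|a(\cdot,z)\|_{C^s(\Omega)})\,\|w\|_{C^s(\mathbb{R}^n)}^2 \leq C'\|w\|_{C^s}^2 \leq C'(C\|g\|_{C^2} + \rho)^2$, which is $\leq \rho$ provided $\rho$ is chosen small and $\|g\|_{C^2} \leq \rho$ (so that $(C+1)^2 C' \rho^2 \leq \rho$, i.e. $\rho \leq 1/((C+1)^2 C')$).

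For step (2), given $v_1, v_2 \in M$ with $w_i := P_{A,a_1}g + v_i$, I would write $R_1(x, w_1) - R_1(x, w_2) = \sum_{k\geq 2}\frac{a_k(x)}{k!}(w_1^k - w_2^k)$ and use $w_1^k - w_2^k = (w_1 - w_2)\sum_{j=0}^{k-1} w_1^j w_2^{k-1-j}$ together with the algebra property of $C^s(\Omega)$ and the second estimate in Proposition 4.1 to get $\|R_1(\cdot,w_1) - R_1(\cdot,w_2)\|_{C^s(\Omega)} \leq C''(\|w_1\|_{C^s} + \|w_2\|_{C^s})\|v_1 - v_2\|_{C^s(\mathbb{R}^n)} \leq C''' (C\|g\|_{C^2} + \rho)\|v_1 - v_2\|_{C^s}$; shrinking $\rho$ (and keeping $\|g\|_{C^2}\leq\rho$) makes the constant $< 1$, so $\Phi$ is a contraction. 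Then Banach gives a unique fixed point $v \in M$, hence a solution $u = P_{A,a_1}g + v \in H^s(\mathbb{R}^n)\cap C^s(\mathbb{R}^n)$ (membership in $H^s$ follows since both summands are in $H^s(\mathbb{R}^n)$), and $\|u\|_{C^s(\mathbb{R}^n)} \leq \|P_{A,a_1}g\|_{C^s} + \|v\|_{C^s} \leq C\|g\|_{C^2} + \rho \leq (C+1)\|g\|_{C^2}$, giving the claimed bound. Uniqueness within the stated class is exactly the uniqueness of the fixed point in $M$.

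I expect the main obstacle to be the bookkeeping in step (1)–(2): carefully justifying that the relevant compositions of the analytic nonlinearity with $C^s$ functions stay in $C^s(\Omega)$ with the right quantitative bounds (this is where Proposition 4.1 and the $C^s$-algebra inequality do the work), and making sure that the choice of $\rho$ — which must be made \emph{uniformly} for all admissible $g$, i.e. depending only on $\sup_{|z|=R}\|a(\cdot,z)\|_{C^s(\Omega)}$, $c$, $r$, $n$, $s$ and the operator norm $C$ from Proposition 3.8, but not on $g$ itself — is consistent between the two requirements. A minor technical point is that Proposition 3.8 as stated assumes exterior data in $C^\infty_c(W)$, so for the fixed point map I should phrase the regularity estimate for the problem $(\mathcal{R}^s_A + a_1)v = F$ in $\Omega$, $v|_{\Omega_e} = 0$ with $F \in C^s(\Omega) \subset L^\infty(\Omega)$, which follows by the same proof (Proposition 3.6 for the $L^\infty$ bound, Lemma 3.4 and Proposition 3.7 for the Hölder bound, with $g \equiv 0$ and $\mathcal{R}^s_A g$ absent).
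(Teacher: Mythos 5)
Your construction is the same as the paper's: decompose $u = P_{A,a_1}g + v$, observe that $v$ must solve $(\mathcal{R}^s_A + a_1)v = -R_1(x, P_{A,a_1}g+v)$ with zero exterior data, introduce the solution operator for that linear problem with an $L^\infty(\Omega)$ interior source (the paper calls it $J$ and derives its $L^\infty\to C^s$ bound from Lemma 3.4 and Propositions 3.6--3.7, exactly as you sketch), and run a Banach fixed point argument on $M$ using the two estimates of Proposition 4.1. Your self-mapping and contraction steps match the paper's, modulo the cosmetic difference that you estimate $\|R_1(\cdot,w_1)-R_1(\cdot,w_2)\|_{C^s(\Omega)}$ via the algebra property where the paper only needs the $L^\infty(\Omega)$ norm of the difference (the operator $J$ already upgrades $L^\infty$ sources to $C^s$ solutions).

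There is, however, a genuine gap in your last step. You conclude
$\|u\|_{C^s(\mathbb{R}^n)} \leq C\|g\|_{C^2} + \|v\|_{C^s} \leq C\|g\|_{C^2} + \rho \leq (C+1)\|g\|_{C^2}$,
but the final inequality requires $\rho \leq \|g\|_{C^2}$, whereas the hypothesis runs the other way: $\|g\|_{C^2}\leq\rho$, and $\rho$ must be fixed \emph{uniformly} (as you yourself insist) while $\|g\|_{C^2}$ may be arbitrarily small. Knowing only $v\in M$, i.e.\ $\|v\|_{C^s}\leq\rho$, does not yield $\|u\|_{C^s}\leq C\|g\|_{C^2}$. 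The repair is the bootstrap the paper performs: feed the fixed point back into the self-mapping estimate once more to get
$\|v\|_{C^s} = \|\Phi(v)\|_{C^s} \leq C'\|P_{A,a_1}g + v\|^2_{C^s(\Omega)} \leq C''\rho\bigl(\|P_{A,a_1}g\|_{C^s} + \|v\|_{C^s}\bigr)$,
and then, for $\rho$ small enough that $C''\rho < 1/2$, absorb the $\|v\|_{C^s}$ term on the left to obtain $\|v\|_{C^s}\leq 2C''\rho\,\|P_{A,a_1}g\|_{C^s}\leq C\|g\|_{C^2}$. With that one additional line your argument is complete and coincides with the paper's proof.
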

\begin{proof}
Let $u_0:= P_{A, a_1}g$, then by Proposition 3.8 we have
$$||u_0||_{C^s(\mathbb{R}^n)}\leq C_1||g||_{C^2(\mathbb{R}^n)},$$
and the nonlinear 
exterior problem (\ref{Rsanonlinear}) can be written as
\begin{equation}
\left\{
\begin{aligned}
\mathcal{R}^s_A (u- u_0)+ a_1(x)(u- u_0)&= -R_1(x, u)\quad \text{in}\,\,\Omega\\
u- u_0&= 0\quad \text{in}\,\,\Omega_e.\\
\end{aligned}
\right.
\end{equation}

Now for $f\in L^\infty(\Omega)$, we consider the solution operator
$J: f\to u_f\in \tilde{H}^s(\Omega)$ of
$$\mathcal{R}^s_A u+ a_1(x)u= f\quad \text{in}\,\,\Omega.$$
We write
$$(-\Delta)^s u= ((-\Delta)^s-\mathcal{R}^s_A)u- a_1(x)u+ f\quad \text{in}\,\,\Omega,$$
so by Lemma 3.4, Proposition 3.6 and 3.7, we have $J(f)\in C^s(\mathbb{R}^n)$ and
$$||J(f)||_{C^s(\mathbb{R}^n)}\leq C_2||f||_{L^\infty(\Omega)}.$$
We define maps $G, F$ by 
$$G(v):= R_1(x, u_0+ v),\qquad F:= J\circ G.$$

We will show that
$F$ is a contraction map on the complete metric space $M$ for 
small $\rho$, which will be chosen later.
In fact, for small $\rho$ and $v\in M$, we have
$$||F(v)||_{C^s(\mathbb{R}^n)}\leq C_2||G(v)||_{L^\infty(\Omega)}
= C_2||R_1(x, u_0+ v)||_{L^\infty(\Omega)}\leq C'_2||u_0+ v||^2_{C^s(\Omega)}
\leq C''_2\rho^2.$$
Here we use the first estimate in Proposition 4.1 and the constant $C''_2$
is independent of $\rho$.

Also for small $\rho$ and $v_1, v_2\in M$, we have
$$||F(v_1)- F(v_2)||_{C^s(\mathbb{R}^n)}\leq C_2||G(v_1)- G(v_2)||_{L^\infty(\Omega)}$$
$$= C_2||R_1(x, u_0+ v_1)- R_1(x, u_0+ v_2)||_{L^\infty(\Omega)}$$
$$\leq ||v_1- v_2||_{L^\infty(\Omega)}
\sum^\infty_{k= 2}||\frac{a_k(x)}{(k-1)!}
(|u_0+ v_1|^{k-1}+ |u_0+ v_1|^{k-1})||_{L^\infty(\Omega)}$$
$$\leq C_3||v_1- v_2||_{L^\infty(\Omega)}(||u_0+ v_1||_{C^s(\Omega)}+ 
||u_0+ v_2||_{C^s(\Omega)})$$
$$\leq C_4\rho ||v_1- v_2||_{L^\infty(\Omega)}.$$
Here we use the inequality 
$$|a^m- b^m|\leq m|a- b|(|a|^{m-1}+ |b|^{m-1})$$
and the second estimate in Proposition 4.1. The constant $C_4$ is independent of $\rho$.

Hence, for small $\rho< 1/(C''_2+ C_4)$, $F$ is a contraction map on $M$ so by Banach
fixed-point theorem, there exists a unique $v_0\in M$ s.t. $F(v_0)= v_0$.

Now note that $v_0= F(v_0)\in \tilde{H}^s(\Omega)$ and
$$||v_0||_{C^s(\mathbb{R}^n)}= ||F(v_0)||_{C^s(\mathbb{R}^n)}
\leq C'_2||u_0+ v_0||^2_{C^s(\Omega)}
\leq C'_3\rho(||u_0||_{C^s(\Omega)}+ ||v_0||_{C^s(\Omega)})$$
where the constant $C'_3$ is independent of $\rho$.
Hence, for small $\rho< 1/(2C'_3)$, we have
$$||v_0||_{C^s(\mathbb{R}^n)}\leq 2C'_3\rho||u_0||_{C^s(\mathbb{R}^n)}$$
and then $u:= u_0+ v_0$ satisfies
$$||u||_{C^s(\mathbb{R}^n)}\leq C||g||_{C^2(\mathbb{R}^n)}.$$
\end{proof}

\section{The Inverse Problem}
From now on, we denote the bounded solution operator associated with
(\ref{Rsanonlinear}) by $Q_{A, a}$. 

Proposition 4.2 ensures that the Dirichlet-to-Neumann map $\Lambda_{A, a}$ given by (\ref{DN}) is well-defined for $g$ satisfying the condition assumed in the statement of the proposition. We remark that $\Lambda_{A, a}$ coincides the Dirichlet-to-Neumann map
defined in Subsection 2.3 in \cite{lai2020inverse} when $A= 0$.

The first order linearization in $H^s(\mathbb{R}^n)$ will be useful when we prove Theorem 1.1
later.

\begin{prop}
Suppose $\Omega\cup \mathrm{supp}\,A\subset B_r(0)$ for some $r> 0$
and $||A||_{L^\infty(\mathbb{R}^n)}\leq \pi/(8\sqrt{n}r)$, $W\cap B_{3r}(0)= \emptyset$ and $g\in C^\infty_c(W)$, then $$Q_{A, a}(\epsilon g)/\epsilon\to P_{A, a_1}g$$ in $H^s(\mathbb{R}^n)$ as $\epsilon\to 0$.
\end{prop}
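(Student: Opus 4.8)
The plan is to compare the nonlinear solution $Q_{A,a}(\epsilon g)$ with the linear solution $\epsilon P_{A,a_1}g$ directly. Write $u_\epsilon := Q_{A,a}(\epsilon g)$ and $w_\epsilon := P_{A,a_1}g$; by Proposition 4.2, for $\epsilon$ small enough that $\|\epsilon g\|_{C^2(\mathbb R^n)}\le\rho$ we have $u_\epsilon\in H^s(\mathbb R^n)\cap C^s(\mathbb R^n)$ with $\|u_\epsilon\|_{C^s(\mathbb R^n)}\le C\|\epsilon g\|_{C^2(\mathbb R^n)} = C\epsilon\|g\|_{C^2(\mathbb R^n)}$, and moreover $v_\epsilon := u_\epsilon - \epsilon w_\epsilon$ lies in $\tilde H^s(\Omega)$. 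Set $r_\epsilon := v_\epsilon/\epsilon = u_\epsilon/\epsilon - w_\epsilon$; the goal is to show $\|r_\epsilon\|_{H^s(\mathbb R^n)}\to 0$.

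First I would derive the equation solved by $v_\epsilon$. Since $\mathcal R^s_A u_\epsilon + a(x,u_\epsilon)=0$ in $\Omega$ with $a(x,z)=a_1(x)z + R_1(x,z)$, and $\mathcal R^s_A w_\epsilon + a_1 w_\epsilon = 0$ in $\Omega$ with $w_\epsilon|_{\Omega_e}=g$, subtracting $\epsilon$ times the second from the first gives
\begin{equation*}
\mathcal R^s_A v_\epsilon + a_1(x) v_\epsilon = -R_1(x, u_\epsilon)\quad\text{in }\Omega,\qquad v_\epsilon|_{\Omega_e}=0.
\end{equation*}
Testing this weak formulation against $v_\epsilon\in\tilde H^s(\Omega)$ and using the coercivity of $B_{A,a_1}$ on $\tilde H^s(\Omega)$ (Proposition 3.1), one gets $c\|v_\epsilon\|_{H^s(\mathbb R^n)}^2 \le |\langle R_1(\cdot,u_\epsilon), v_\epsilon\rangle| \le \|R_1(\cdot,u_\epsilon)\|_{L^2(\Omega)}\|v_\epsilon\|_{L^2(\Omega)}$, hence $\|v_\epsilon\|_{H^s(\mathbb R^n)} \le C\|R_1(\cdot,u_\epsilon)\|_{L^2(\Omega)}$. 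Now I bound the right-hand side: by Proposition 4.1 (first estimate) applied with $m=1$, together with $\|u_\epsilon\|_{C^s(\Omega)}\le C\epsilon\|g\|_{C^2(\mathbb R^n)}\le 1$ for $\epsilon$ small, we have $\|R_1(\cdot,u_\epsilon)\|_{C^s(\Omega)} \le C'\|u_\epsilon\|_{C^s(\Omega)}^2 \le C''\epsilon^2\|g\|_{C^2(\mathbb R^n)}^2$, so in particular $\|R_1(\cdot,u_\epsilon)\|_{L^2(\Omega)}\le C''|\Omega|^{1/2}\epsilon^2\|g\|_{C^2(\mathbb R^n)}^2$. Combining, $\|v_\epsilon\|_{H^s(\mathbb R^n)}\le C\epsilon^2$, so $\|r_\epsilon\|_{H^s(\mathbb R^n)} = \|v_\epsilon\|_{H^s(\mathbb R^n)}/\epsilon \le C\epsilon \to 0$, which is exactly $Q_{A,a}(\epsilon g)/\epsilon \to P_{A,a_1}g$ in $H^s(\mathbb R^n)$.

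The main obstacle I anticipate is making sure the remainder estimate is genuinely quadratic in a norm that controls $L^2(\Omega)$, and that the constants are uniform in $\epsilon$ — this is precisely what Proposition 4.1 and Proposition 4.2 are set up to provide, since the $C^s$ control of $u_\epsilon$ (not merely $H^s$ control) is what lets one invoke the algebra/Cauchy-estimate bounds on the tail $R_1$. A secondary technical point is justifying the testing of the $v_\epsilon$-equation against $v_\epsilon$ itself and identifying $\langle \mathcal R^s_A v_\epsilon + a_1 v_\epsilon, v_\epsilon\rangle$ with the coercive bilinear form $B_{A,a_1}(v_\epsilon,v_\epsilon)$; this is routine given that $v_\epsilon\in\tilde H^s(\Omega)$ and the equation holds in $\Omega$ in the weak sense, but it should be stated carefully. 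Once the quadratic-in-$\epsilon$ bound on $\|v_\epsilon\|_{H^s}$ is in hand, the conclusion is immediate.
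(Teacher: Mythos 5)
Your argument is correct and is essentially the paper's own proof: the paper likewise forms the difference (there written as $v_{\epsilon,g}=P_{A,a_1}g-Q_{A,a}(\epsilon g)/\epsilon$, i.e.\ your $v_\epsilon$ already divided by $\epsilon$), derives the equation $\mathcal R^s_A v+a_1v=\tfrac1\epsilon R_1(x,u_{\epsilon,g})$ in $\Omega$, tests against $v$ using coercivity, and bounds the remainder via the quadratic estimate of Proposition 4.1 together with the bound $\|u_{\epsilon,g}\|_{C^s}\le C\epsilon\|g\|_{C^2}$ from Proposition 4.2 to get an $O(\epsilon)$ bound in $H^s$. The only cosmetic differences are the rescaling of the unknown and citing Proposition 3.1 rather than the norm equivalence of Proposition 2.3 for coercivity.
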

\begin{proof}
Write $u_{\epsilon, g}:= Q_{A, a}(\epsilon g)$ and $u_g:= P_{A, a_1}g$ for sufficiently small $\epsilon$. 

Note that
$v_{\epsilon, g}:= u_g- \frac{u_{\epsilon, g}}{\epsilon}\in \tilde{H}^s(\Omega)$
and we have
$$\mathcal{R}^s_A v_{\epsilon, g}+ a_1(x)v_{\epsilon, g}
= \frac{1}{\epsilon}R_1(x, u_{\epsilon, g})\quad \text{in}\,\,\Omega.$$
Now choose $v_{\epsilon, g}$ as a test function, then by Proposition 2.3 we have
$$\langle \mathcal{R}^s_A v_{\epsilon, g}+ a_1 v_{\epsilon, g}, 
v_{\epsilon, g}\rangle
\geq [v_{\epsilon, g}]^2_{H^s_A}+ c||v_{\epsilon, g}||^2_{L^2(\Omega)}
\geq c'||v_{\epsilon, g}||^2_{H^s}$$
and by Proposition 4.1 and 4.2, we have
$$|\langle\frac{1}{\epsilon}R_1(x, u_{\epsilon, g}), v_{\epsilon, g}\rangle|
\leq \frac{C}{\epsilon}||R_1(x, u_{\epsilon, g})||_{L^\infty(\Omega)}||v_{\epsilon, g}||_{L^2(\Omega)}$$
$$\leq \frac{C'}{\epsilon}||u_{\epsilon, g}||^2_{C^s(\mathbb{R}^n)}||v_{\epsilon, g}||_{L^2(\Omega)}
\leq C''\epsilon||g||^2_{C^2(\mathbb{R}^n)}||v_{\epsilon, g}||_{L^2(\Omega)}.$$
Hence we have
$$||v_{\epsilon, g}||_{H^s}\leq C'''\epsilon||g||^2_{C^2(\mathbb{R}^n)}.$$
Now it is clear that $v_{\epsilon, g}\to 0$ in $H^s(\mathbb{R}^n)$ as $\epsilon\to 0$.
\end{proof}

Now we are ready to prove Theorem 1.1.

\begin{proof}
Write $u^{(j)}_{\epsilon, g}:= Q_{A, a^{(j)}}(\epsilon g)$ 
and $u^{(j)}_g:= P_{A, a^{(j)}_1}(g)$ for $g\in C^\infty_c(W_1)$ 
and sufficiently small $\epsilon$.

By the assumption, we have
$$\mathcal{R}^s_A u^{(1)}_{\epsilon, g}= 
\mathcal{R}^s_A u^{(2)}_{\epsilon, g}\quad
\mathrm{in}\,\,W_2.$$
Since $u^{(1)}_{\epsilon, g}= u^{(2)}_{\epsilon, g}= \epsilon g$ in $\Omega_e$,
then by Proposition 2.4 we have
$u^{(1)}_{\epsilon, g}= u^{(2)}_{\epsilon, g}=: u_{\epsilon, g}$ in $\mathbb{R}^n$ so
$$\mathcal{R}^s_A u_{\epsilon, g}+ a^{(j)}(x, u_{\epsilon, g})= 0\quad
\mathrm{in}\,\,\Omega\quad (j= 1, 2),$$
which implies
$$(a^{(1)}_1(x)- a^{(2)}_1(x))u_{\epsilon, g}= 
R^{(2)}_1(x, u_{\epsilon, g})- R^{(1)}_1(x, u_{\epsilon, g})\quad
\mathrm{in}\,\,\Omega.$$

Now note that
$$||a^{(1)}_1(x)- a^{(2)}_1(x)||_{L^2(\Omega)}\leq 
||(a^{(1)}_1(x)- a^{(2)}_1(x))(1- \frac{u_{\epsilon, g}}{\epsilon})||_{L^2(\Omega)}
+ \frac{1}{\epsilon}||(a^{(1)}_1(x)- a^{(2)}_1(x))u_{\epsilon, g}||_{L^2(\Omega)}$$
\begin{equation}\label{Est}
\leq ||a^{(1)}_1(x)- a^{(2)}_1(x)||_{L^\infty(\Omega)}||1- \frac{u_{\epsilon, g}}{\epsilon}||_{L^2(\Omega)}
+ \frac{1}{\epsilon}||(a^{(1)}_1(x)- a^{(2)}_1(x))u_{\epsilon, g}||_{L^2(\Omega)}
\end{equation}

For given $\delta> 0$, by Proposition 3.2 we can choose 
$g\in C^\infty_c(W_1)$ s.t. 
$$||1- u_g||_{L^2(\Omega)}\leq \delta.$$ 
For this chosen $g$, we have 
$$||1- \frac{u_{\epsilon, g}}{\epsilon}||_{L^2(\Omega)}\leq 2\delta$$
for small $\epsilon$ by Proposition 5.1 and we also have
$$\frac{1}{\epsilon}||(a^{(1)}_1(x)- a^{(2)}_1(x))u_{\epsilon, g}||_{L^2(\Omega)}\leq
\frac{C}{\epsilon}||R^{(2)}_1(x, u_{\epsilon, g})- R^{(1)}_1(x, u_{\epsilon, g})||
_{L^\infty(\Omega)}$$
$$\leq \frac{C'}{\epsilon}||u_{\epsilon, g}||^2_{C^s(\Omega)}\leq
C''\epsilon||g||^2_{C^2(\mathbb{R}^n)}$$
for small $\epsilon$ by Proposition 4.1 and 4.2.

Now let $\epsilon\to 0$ in (\ref{Est}), then we have
$$||a^{(1)}_1(x)- a^{(2)}_1(x)||_{L^2(\Omega)}\leq 
2\delta ||a^{(1)}_1(x)- a^{(2)}_1(x)||_{L^\infty(\Omega)}.$$
Since $\delta$ is arbitrary, then $a^{(1)}_1= a^{(2)}_1=: a_1$.

Iteratively, once we have shown $a^{(1)}_j= a^{(2)}_j$ ($1\leq j\leq l-1$), then we have
$$\frac{1}{l!}(a^{(1)}_l(x)- a^{(2)}_l(x))u^l_{\epsilon, g}= 
R^{(2)}_l(x, u_{\epsilon, g})- R^{(1)}_l(x, u_{\epsilon, g})\quad
\mathrm{in}\,\,\Omega.$$

Now note that
$$|||a^{(1)}_l(x)- a^{(2)}_l(x)|^\frac{1}{l}||_{L^2(\Omega)}$$
$$\leq |||a^{(1)}_l(x)- a^{(2)}_l(x)|^\frac{1}{l}||_{L^\infty(\Omega)}||1- \frac{u_{\epsilon, g}}{\epsilon}||_{L^2(\Omega)}
+ \frac{1}{\epsilon}|||a^{(1)}_l(x)- a^{(2)}_l(x)|^\frac{1}{l}u_{\epsilon, g}||_{L^2(\Omega)}.$$

For given $\delta> 0$, we can choose 
$g\in C^\infty_c(W_1)$ s.t. 
$$||1- u_g||_{L^2(\Omega)}\leq \delta,$$ 
and for this chosen $g$
$$\frac{1}{\epsilon}|||a^{(1)}_l(x)- a^{(2)}_l(x)|^\frac{1}{l}u_{\epsilon, g}||_{L^2(\Omega)}\leq 
\frac{C}{\epsilon}||R^{(2)}_l(x, u_{\epsilon, g})- R^{(1)}_l(x, u_{\epsilon, g})||^{\frac{1}{l}}_{L^\infty(\Omega)}$$
$$\leq \frac{C'}{\epsilon}||u_{\epsilon, g}||^\frac{l+1}{l}_{C^s(\Omega)}\leq
C''\epsilon^\frac{1}{l}||g||^\frac{l+1}{l}_{C^2(\mathbb{R}^n)}$$
for small $\epsilon$ by Proposition 4.1 and 4.2.

Now let $\epsilon\to 0$, then we have
$$|||a^{(1)}_l(x)- a^{(2)}_l(x)|^\frac{1}{l}||_{L^2(\Omega)}\leq 
2\delta |||a^{(1)}_l(x)- a^{(2)}_l(x)|^\frac{1}{l}||_{L^\infty(\Omega)}.$$
Since $\delta$ is arbitrary, then $a^{(1)}_l= a^{(2)}_l$.
\end{proof}

\bibliographystyle{plain}
{\small\bibliography{Reference1}}
\end{document}